\newtheorem{example}{Example}[section]
\title{Necessary Condition on Lyapunov Functions Corresponding to the Globally Asymptotically Stable Equilibrium Point}
\author{Chirayu D. Athalye, Harish K. Pillai, and Debasattam Pal\thanks{Chirayu D. Athalye (\email{chirayu@ee.iitb.ac.in}) and Harish K. Pillai (\email{hp@ee.iitb.ac.in}) are with the Department of Electrical Engineering,
Indian Institute of Technology Bombay, Mumbai 400076, India. Debasattam Pal (\email{debasattam@iitg.ernet.in}) is with the Department of Electronics and Electrical Engineering, Indian Institute of Technology Guwahati, Guwahati 781309, India.}}
\begin{document}
\maketitle
\slugger{sicon}{xxxx}{xx}{x}{x--x}

\begin{abstract}
It is well known that, the existence 
 of a Lyapunov function is a sufficient condition for stability, asymptotic stability, or global
 asymptotic stability of an equilibrium point of an autonomous system $\dot{\mathbf{x}}=f(\mathbf{x})$.
 In variants of Lyapunov theorems, the condition for a Lyapunov candidate $V$ (continuously differentiable 
 and positive definite function) to be a Lyapunov function is that its time derivative along system
 trajectories, i.e. $\dot{V}(\cdot)=\langle \nabla V(\cdot),f(\cdot) \rangle$, must be negative semi-definite 
 or negative definite. Numerically checking positive definiteness of $V$ is very difficult; checking negative definiteness of  $\dot{V}(\cdot)$ is even more difficult, because it involves dynamics of the system. \\
 \indent We give a necessary 
 condition independent of the system dynamics, for every Lyapunov function corresponding to the globally asymptotically stable equilibrium point of $\dot{\mathbf{x}}=f(\mathbf{x})$. This necessary condition is numerically easier to check than checking positive definiteness of a function. Therefore, it can be used as a first level test to check whether a given continuously differentiable function is a Lyapunov function candidate or not. We also propose a method, which we call a generalized steepest descent method, to check this condition numerically. Generalized steepest descent method can be used for ruling out Lyapunov candidates corresponding to the globally asymptotically stable equilibrium point of $\dot{\mathbf{x}}=f(\mathbf{x})$. It can also be used as a heuristic to check the local positive definiteness of a function, which is a necessary condition for a Lyapunov function corresponding to a stable and/or asymptotically stable equilibrium point of an autonomous system.
\end{abstract}
\begin{keywords}
 Lyapunov Theory, Global Asymptotic Stability, Generalized Steepest Descent Method.
\end{keywords}
\begin{AMS}
93D05, 93D20, 37C10, 37C75.
\end{AMS}
\pagestyle{myheadings}
\thispagestyle{plain}
\markboth{Chirayu D. Athalye}{USING SIAM'S \LaTeX\ MACROS}
\section{Introduction}
\indent Stability analysis is a very crucial topic in systems theory. There are different kinds 
of stability problems (e.g: stability of equilibrium points, stability of periodic orbits, 
input-output stability etc.) that arise in the study of dynamical systems. Stability of the 
equilibrium points of an autonomous system is characterized using Lyapunov theory. Existence 
of a Lyapunov function is a sufficient condition for stability of an equilibrium point of an 
autonomous system. However for a large class of autonomous systems, finding a Lyapunov function 
may not be an easy task.\\ 
\indent In case of linear systems, the problem of finding a Lyapunov function reduces to solving 
a simple Linear Matrix Inequality (LMI); and hence there is a systematic way to find a Lyapunov 
function. Also in case of electrical and mechanical systems, there are natural Lyapunov function 
candidates in terms of physical energy functions \cite{khalil}. However in general for non-linear 
systems, there is no systematic method to find a Lyapunov function \cite{khalil}. In theory there 
is a variable gradient method; but for many autonomous systems, the variable gradient method is
extremely difficult to apply. \\
\indent In \cite{pablo} a technique is given for an algorithmic construction of a Lyapunov function 
for non-linear autonomous systems, with polynomial vector fields, using semidefinite programming 
and sum of squares decomposition. In \cite{prajna} this technique is extended to include systems 
with non-polynomial vector fields, which can be transformed to an equivalent system with polynomial
vector fields under equality and inequality constraints on the state variables. Unfortunately these techniques 
are limited to only certain class of autonomous systems. In general, it is numerically difficult 
to check the conditions to be satisfied by a Lyapunov candidate and a Lyapunov function. 
Checking positive definiteness of simple polynomial functions is also an NP-hard problem when 
polynomial has degree $4$ or higher \cite{murty}. \\
\indent  Checking the global asymptotic stability of an equilibrium point of a nonlinear autonomous system is a more difficult problem than checking stability or asymptotic stability of an equilibrium point. This is because while checking the global asymptotic stability, one cannot use the technique of local linearization about an equilibrium point. In this paper, we provide a necessary condition in terms of a local minima of the $h$ function (to be defined later in section-\ref{h-function}) that every Lyapunov function $V$, corresponding to the globally asymptotically stable equilibrium point of $\dot{\mathbf{x}}=f(\mathbf{x})$, must satisfy. As this necessary condition on a Lyapunov function does not involve system dynamics, it is easier 
to check. Moreover, since every Lyapunov function corresponding to the globally asymptotically stable equilibrium point of $\dot{\mathbf{x}}=f(\mathbf{x})$ has to satisfy this condition, the set of valid 
Lyapunov function candidates becomes much smaller. This would naturally benefit any method of finding 
a Lyapunov function corresponding to the globally asymptotically stable equilibrium point of $\dot{\mathbf{x}}=f(\mathbf{x})$. \\
\indent For the global asymptotic stability analysis of an equilibrium point, the necessary condition obtained from the Theorem \ref{chimu} can be used as a first level test for a Lyapunov function candidate. 
Note that, this first level test is based on the generalized steepest descent method given in section-\ref{huer}, and is easier to check than checking positive definiteness of a function.
Therefore, the computationally more intensive positive definiteness check can be spared for functions which fail to satisfy this first level test. 
The generalized steepest descent method given in section-\ref{huer} can also be used as a heuristic to check the local positive definiteness of a function, which is a well known necessary condition for a Lyapunov function corresponding to a stable and/or asymptotically stable equilibrium point of an autonomous system. \\
\indent While tackling the problem of finding a Lyapunov function to conclude the global asymptotic stability of the equilibrium point, one obvious start is by employing continuously differentiable and coercive functions which can be written as sum of squares. In order to conclude that a function in the above class is a Lyapunov function, one needs to check that the time derivative of this function along system trajectories is negative definite, which is again numerically very difficult. With our proposed necessary condition, the set of functions on which this negative definiteness condition (involving system dynamics) needs to be checked can be drastically reduced.\\
\indent This paper is organized as follows. In section-\ref{not-pre}, we explain notations 
and some preliminaries. Lyapunov theory and LaSalle's invariance principle are described in brief in section-\ref{lpv-theory}. In section-\ref{h-function}, we define the $h$ function which will be used later to state our necessary condition.
 We state and prove our main result
in section-\ref{main-result}, which gives a necessary condition for a Lyapunov function $V$ corresponding to the globally asymptotically stable equilibrium point.  In section-\ref{huer}, we explain the generalized steepest descent method to check the necessary condition obtained from Theorem \ref{chimu}. This method is useful for numerically ruling out some Lyapunov 
function candidates corresponding to the global asymptotic stability. We also give some examples to demonstrate usefulness of our necessary condition. Finally section-\ref{con-futrwrk} contains conclusions and future work. In appendix, we state and prove some auxiliary results related to the $h$ function.

\section{Notations and Preliminaries}\label{not-pre}
\indent $\mathbb{R}$ denotes the field of real numbers, and $\mathbb{R}^n$ is the $n$-dimensional real Euclidean space over $\mathbb{R}$. The set of natural numbers is denoted by $\mathbb{N}$. We use $\mathbb{R}_+$ to denote non-negative real numbers, and $\mathbb{R}_{++}$ to denote positive real numbers. Lowercase bold faced 
letters are used to denote vectors in 
$\mathbb{R}^n$; and lowercase non-bold faced letters to denote real scalars.\\
\indent $\parallel \cdot \parallel$ denotes the Euclidean norm or $2$-norm 
on $\mathbb{R}^n$. The ball and sphere in $\mathbb{R}^n$ of radius $r>0$ centered 
at $\mathbf{x}_0$, with respect to $2$-norm, are defined as:
\begin{eqnarray}
 B(\mathbf{x}_0,r)&:=&\{\mathbf{x} \in \mathbb{R}^n \mid \;\parallel \mathbf{x} - \mathbf{x}_0
 \parallel \,<\, r \}\,, \\
 S(\mathbf{x}_0,r)&:=&\{\mathbf{x} \in \mathbb{R}^n \mid \; \parallel \mathbf{x} - \mathbf{x}_0
 \parallel \,=\, r \}\,.
\end{eqnarray}
$\parallel \cdot \parallel_{\infty}$ denotes the $\infty$-norm on $\mathbb{R}^n$. One analogously defines the ball and sphere in $\mathbb{R}^n$ of radius $r$ centered at $\mathbf{x}_0$, with respect to $\infty$-norm. Let $D \subseteq \mathbb{R}^n$; we denote the interior and the closure of $D$ by $D^o$ and 
$\overline{D}$ respectively. The boundary of $D$, denoted by $\partial D$, is defined as 
$\partial D := \overline{D} \setminus D^o$. \\
\indent A function $g: \mathbb{R} \rightarrow \mathbb{R}$ is said to be 
increasing, if $x<y$ $\Longrightarrow$ $g(x)\leq g(y)$; and strictly increasing if $x<y$ 
$\Longrightarrow$ $g(x) < g(y)$. A real valued function $f:\mathbb{R}^n \rightarrow \mathbb{R}$ is said to be coercive, if for every sequence $\{\mathbf{x}_n\}\in \mathbb{R}^n$ which satisfy $\parallel \mathbf{x}_n \parallel\; \rightarrow\, \infty$, we have $\displaystyle \lim_{n \rightarrow \infty} f(\mathbf{x}_n)= \infty$ (see, \cite{bert}). Let $  f : D \rightarrow \mathbb{R} $, where $D 
\subseteq \mathbb{R}^n$; and let $E \subseteq D$. We denote the restriction of $f$ to $E$ as 
$f\vert_{E}$. For a function $f:\mathbb{R}^n \rightarrow \mathbb{R}$, we denote its 
gradient by $\nabla\, f(\cdot)$. \\
\indent A real valued function $ f: G \rightarrow \mathbb{R} $, where $G \subseteq \mathbb{R}^n$, is 
 said to be lower semi-continuous at a point $ \mathbf{x} \in G$, if for every sequence 
$ \{\mathbf{x}_k\} $ in $ G $ that converges to $ \mathbf{x} $, $ f(\mathbf{x}) \leq 
\displaystyle\liminf_{ k \rightarrow \infty} f(\mathbf{x}_k)$. A real valued function $f: G \rightarrow \mathbb{R} $ is said to be lower semi-continuous, if it is lower semi-continuous
at every $\mathbf{x} \in G$ (see, \cite{bert}). \\
\indent A function $f:G \rightarrow \mathbb{R}^n$, where $G \subseteq \mathbb{R}^n$, is said to be locally Lipschitz at a point $\mathbf{x}_0 \in G$, if there exists $\varepsilon \in \mathbb{R}_{++}$ and a Lipschitz constant $l \in \mathbb{R}_{++}$ such that, $\forall\; \mathbf{x},\mathbf{y} \in B(\mathbf{x}_0,\varepsilon)$ the following condition is satisfied:
 \begin{eqnarray}
  \parallel f(\mathbf{x}) - f(\mathbf{y}) \parallel \,&\leq&\; l \parallel \mathbf{x} - \mathbf{y} \parallel\;. 
 \end{eqnarray}
 A real valued function $f: G \rightarrow \mathbb{R}^n $ is said to be locally Lipschitz, if it is locally Lipschitz at every $\mathbf{x}_0 \in G \subseteq \mathbb{R}^n$ (see, \cite{khalil}). \\
\indent Consider the following equivalence relation on the vectors in $\mathbb{R}^n$: 
$\mathbf{d}_1$ is said to be equivalent to $\mathbf{d}_2$, denoted as 
$\mathbf{d}_1 \sim \mathbf{d}_2$, if $\mathbf{d}_1=\alpha \mathbf{d}_2$ for some $\alpha>0$. 
The set of equivalence classes of vectors in $\mathbb{R}^n$ induced by this relation are the directions in 
$\mathbb{R}^n$. Therefore, directions in $\mathbb{R}^n$ can be represented as points on the 
unit sphere $S(\mathbf{0},1)$. The induced topology on $S(\mathbf{0},1)$ from 
$\mathbb{R}^n$ is used to define the open and closed sets of $S(\mathbf{0},1)$ as follows. 
\begin{definition}
 Let $E \subseteq S(\mathbf{0},1)$. 
\begin{itemize}
 \item $\mathbf{d} \in E$ is said to be an interior point of $E$ (with respect to the induced 
 topology on $S(\mathbf{0},1)$), if $\exists$ $B(\mathbf{d}, \varepsilon)$ for some 
 $\varepsilon > 0$ such that $B(\mathbf{d}, \varepsilon) \cap S(\mathbf{0},1) \subset E$. 
 The set of all such interior points of $E$ is called the interior of $E$ with respect to the 
 induced topology on $S(\mathbf{0},1)$.
\item $E$ is said to be an open subset of $S(\mathbf{0},1)$, if every $\mathbf{d} \in E$ is 
an interior point of $E$ with respect to the induced topology on $S(\mathbf{0},1)$.
\item $E$ is said to be a closed subset of $S(\mathbf{0},1)$, if every limit point of $E$ 
belongs to $E$.
\item We define the boundary of $E$ (with respect to the induced topology on $S(\mathbf{0},1))$
as $\partial E := \overline{E} \setminus E^o$, where $E^o$ is with respect to the 
induced 
topology on $S(\mathbf{0},1)$.
\end{itemize}
\end{definition}
\indent For $\mathbf{d} \in S(\mathbf{0},1)$, we will use $N_{\varepsilon}(\mathbf{d})$ to denote its neighborhood on $S(\mathbf{0},1)$; i.e. $N_{\varepsilon}(\mathbf{d}) = B(\mathbf{d}, \varepsilon) \cap S(\mathbf{0},1)$.
\section{Lyapunov Theory and LaSalle's Invariance Principle}\label{lpv-theory}
\indent In this section we briefly cover Lyapunov theory and LaSalle's invariance principle. Reader can refer to \cite{khalil}, \cite{lyapunov}, \cite{vidya} for detailed treatment on these topics. \\
\indent Consider an autonomous system:
\begin{equation}\label{auto}
 \dot{\mathbf{x}} = f(\mathbf{x}),
\end{equation}
where $f: G \rightarrow \mathbb{R}^n$ is a locally Lipschitz map on its domain 
$G \subseteq \mathbb{R}^n$. A point $\mathbf{x}^* $ is said to be an equilibrium point of the 
autonomous system represented by \eqref{auto}, if it has the property that whenever the system
starts with the initial condition $\mathbf{x}(0)=\mathbf{x}^*$, it remains at $\mathbf{x}^*$
for all future time, i.e. $\mathbf{x}(t)= \mathbf{x}^*$, $\forall\, t \geq 0$. Therefore, 
$\mathbf{x}^*$ is an equilibrium point, if and only if it is a real root of the equation $f(\mathbf{x}) = \mathbf{0}$. 
\begin{definition}
 An equilibrium point $\mathbf{x} = \mathbf{x}^*$ of \eqref{auto} is said to be
\begin{itemize}
 \item stable if $\;\forall \; \varepsilon > 0$, $\exists\; \delta > 0$ such that, 
 $\mathbf{x}(0) \in B(\mathbf{x}^*, \delta) \Longrightarrow  \mathbf{x}(t) \in 
 B(\mathbf{x}^*,\varepsilon)$, $\forall\, t\geq 0 $.
 \item unstable if it is not stable.
 \item asymptotically stable if it is stable and $\delta$ can be chosen such that 
\[  \mathbf{x}(0) \in B(\mathbf{x}^*, \delta) \Longrightarrow \displaystyle\lim_{t \rightarrow 
\infty} \mathbf{x}(t) = \mathbf{x}^*.\]
\end{itemize}
\end{definition}
\indent Lyapunov theorem, which is stated below, gives a sufficient condition for 
stability and asymptotic stability of an equilibrium point $\mathbf{x}^*$.
\begin{theorem}\label{lya}
 Let $\mathbf{x}^*$ be an equilibrium point of \eqref{auto}. Let $ V: D \rightarrow 
 \mathbb{R} $, where $D \subseteq \mathbb{R}^n$ is an open set containing $\mathbf{x}^*$, be a continuously 
 differentiable function such that;
\begin{eqnarray}
& V(\mathbf{x}) > 0,\; \forall\; \mathbf{x} \in 
(D \setminus \{\mathbf{x}^*\}); \mbox{ and } V(\mathbf{x}^*) =0\,,  & \label{lya can} \\ \nonumber \\
&  \dot{V}(\mathbf{x}) = (\nabla V(\mathbf{x}))^Tf(\mathbf{x}) \leq 0, \; \forall \; 
\mathbf{x} \in D \,.& \label{lya fn} \,
\end{eqnarray}
Then, $\mathbf{x}^* $ is a stable equilibrium point \eqref{auto}. Moreover, if $ \dot V(\mathbf{x}) < 0, \; \forall \; \mathbf{x} \in (D \setminus \{\mathbf{x}^*\})$, 
then $\mathbf{x}^* $ is an asymptotically stable equilibrium point \eqref{auto}. \vspace{0.05in}
\end{theorem}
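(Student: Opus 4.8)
The plan is to run the classical sublevel-set argument, using only continuity of $V$, the sign conditions \eqref{lya can}--\eqref{lya fn}, and compactness of spheres; local existence of trajectories is guaranteed near $\mathbf{x}^*$ since $f$ is locally Lipschitz. First I would fix an arbitrary $\varepsilon > 0$ and, shrinking it if necessary, assume $\overline{B(\mathbf{x}^*,\varepsilon)} \subseteq D$. Because $S(\mathbf{x}^*,\varepsilon)$ is compact and does not contain $\mathbf{x}^*$, condition \eqref{lya can} together with continuity of $V$ yields
\[ \alpha := \min_{\mathbf{x} \in S(\mathbf{x}^*,\varepsilon)} V(\mathbf{x}) > 0. \]
I would then pick any $\beta \in (0,\alpha)$ and, using $V(\mathbf{x}^*)=0$ and continuity of $V$ once more, choose $\delta \in (0,\varepsilon)$ so small that $V(\mathbf{x}) < \beta$ for every $\mathbf{x} \in B(\mathbf{x}^*,\delta)$.

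The core of the stability claim is a no-crossing argument. For a trajectory with $\mathbf{x}(0) \in B(\mathbf{x}^*,\delta)$, set $t_1 := \inf\{t>0 : \mathbf{x}(t) \notin B(\mathbf{x}^*,\varepsilon)\}$ and suppose, for contradiction, $t_1 < \infty$. On $[0,t_1]$ the trajectory lies in $\overline{B(\mathbf{x}^*,\varepsilon)} \subseteq D$, so \eqref{lya fn} gives $\tfrac{d}{dt}V(\mathbf{x}(t)) = \dot V(\mathbf{x}(t)) \leq 0$, whence $V(\mathbf{x}(t_1)) \leq V(\mathbf{x}(0)) < \beta$. On the other hand, continuity of the trajectory forces $\mathbf{x}(t_1) \in S(\mathbf{x}^*,\varepsilon)$, so $V(\mathbf{x}(t_1)) \geq \alpha > \beta$, a contradiction. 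Hence $t_1 = \infty$, the trajectory never leaves the compact set $\overline{B(\mathbf{x}^*,\varepsilon)}$ (which also rules out finite escape time, so the solution exists for all $t \geq 0$), and $\mathbf{x}(t) \in B(\mathbf{x}^*,\varepsilon)$ for all $t \geq 0$ — exactly stability.

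For asymptotic stability I would first note that, by stability, trajectories starting in $B(\mathbf{x}^*,\delta)$ stay in $B(\mathbf{x}^*,\varepsilon)$, and $t \mapsto V(\mathbf{x}(t))$ is non-increasing and bounded below by $0$, hence converges to some $c \geq 0$; the goal is $c = 0$. Arguing by contradiction, suppose $c > 0$. By continuity of $V$ and $V(\mathbf{x}^*)=0$ there is $d \in (0,\varepsilon)$ with $V(\mathbf{x}) < c$ on $B(\mathbf{x}^*,d)$, so the trajectory is confined to the compact annulus $A := \{\mathbf{x} \mid d \leq \parallel \mathbf{x}-\mathbf{x}^* \parallel \leq \varepsilon\}$. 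Since $\dot V$ is continuous and strictly negative on $A$ (using the strengthened hypothesis $\dot V < 0$ on $D\setminus\{\mathbf{x}^*\}$), it attains a maximum $-\gamma < 0$ there; integrating $\dot V(\mathbf{x}(t)) \leq -\gamma$ gives $V(\mathbf{x}(t)) \leq V(\mathbf{x}(0)) - \gamma t \to -\infty$, contradicting $V \geq 0$. Therefore $c = 0$, and positive definiteness of $V$ then forces $\mathbf{x}(t) \to \mathbf{x}^*$: were this false, some subsequence would remain outside a fixed ball $B(\mathbf{x}^*,\rho)$, on which $V$ is bounded below by a positive constant, contradicting $V(\mathbf{x}(t)) \to 0$.

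I expect the only genuinely delicate step to be the no-crossing argument — specifically, justifying that a finite first exit time $t_1$ lands the trajectory exactly on $S(\mathbf{x}^*,\varepsilon)$ and that $V$ stays non-increasing up to and including $t_1$. This rests on continuity of the trajectory (from local Lipschitzness of $f$) and on keeping the entire arc inside $D$, which is precisely why shrinking $\varepsilon$ so that $\overline{B(\mathbf{x}^*,\varepsilon)} \subseteq D$ at the outset is essential. The remaining steps are routine applications of compactness and the sign conditions \eqref{lya can}--\eqref{lya fn}.
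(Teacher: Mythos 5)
Your proof is correct, but note that the paper itself offers no proof of Theorem \ref{lya}: it is the classical Lyapunov stability theorem, stated in section-\ref{lpv-theory} purely as background, with the reader directed to \cite{khalil}, \cite{lyapunov}, and \cite{vidya} for details. Your argument --- the compactness/sublevel-set construction of $\alpha$, $\beta$, $\delta$, the first-exit-time (no-crossing) contradiction for stability, and then the monotone limit $c$ together with the compact-annulus bound $\dot V \leq -\gamma$ for asymptotic stability --- is exactly the standard textbook proof found in those references, and you handle the genuinely delicate points correctly (keeping $\overline{B(\mathbf{x}^*,\varepsilon)} \subseteq D$, ruling out finite escape time via confinement to a compact set, and justifying that the first exit time lands the trajectory on $S(\mathbf{x}^*,\varepsilon)$).
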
\\
A continuously differentiable function $ V : D \rightarrow \mathbb{R} $ satisfying 
\eqref{lya can} is called a Lyapunov candidate; and a continuously differentiable function 
$ V : D \rightarrow \mathbb{R} $ satisfying both \eqref{lya can} and \eqref{lya fn} is called 
a Lyapunov function.\\
\indent Suppose $\mathbf{x}^*$ is an asymptotically stable equilibrium point of \eqref{auto}. 
Then, the largest region around $\mathbf{x}^*$ which satisfies the property that, any trajectory 
starting in that region will converge to $\mathbf{x}^*$ (as $t \rightarrow \infty$) is called the region of attraction of 
$\mathbf{x}^*$. 
\begin{definition}
 If the region of attraction for an asymptotically stable equilibrium point 
$\mathbf{x}^*$ is entire $\mathbb{R}^n$, then $\mathbf{x}^*$ is called the globally asymptotically stable equilibrium point of \eqref{auto}.
\end{definition}\\
Clearly if $\mathbf{x}^*$ is a globally asymptotically 
stable equilibrium point of \eqref{auto}, then it must be the unique equilibrium point of 
\eqref{auto}. The Barbashin-Krasovskii theorem, stated below, gives a sufficient condition 
for $\mathbf{x}^*$ to be the globally asymptotically stable equilibrium point. 
\begin{theorem}\label{krasov}
 Let $\mathbf{x}^*$ be an equilibrium point of \eqref{auto}. Let $V:\mathbb{R}^n \rightarrow 
 \mathbb{R}$ be a continuously differentiable function such that:
\begin{eqnarray}
 &V(\mathbf{x}) > 0,\;\, \forall \, \mathbf{x} \neq \mathbf{x}^*; 
 \mbox{  and  } V(\mathbf{x}^*) = 0\,,&  \label{pos-def}\\ 
 & \parallel \mathbf{z} \parallel \rightarrow \infty \;\, \Longrightarrow \;\, V(\mathbf{x}^* 
 + \mathbf{z}) \rightarrow \infty\,, & \label{crcv}\\ 
 & \dot{V}(\mathbf{x})= (\nabla V(\mathbf{x}))^Tf(\mathbf{x}) < 0, \;\, \forall \, \mathbf{x} 
 \neq \mathbf{x}^*\, . & \label{dynamics}
\end{eqnarray}
Then, $\mathbf{x}^*$ is the globally asymptotically stable equilibrium point of 
\eqref{auto}. \vspace{0.05in}
\end{theorem}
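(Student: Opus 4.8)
The plan is to split the conclusion into a local part and a global part. The local part asserts that $\mathbf{x}^*$ is asymptotically stable, and it comes essentially for free: conditions \eqref{pos-def} and \eqref{dynamics} say exactly that $V$ is a continuously differentiable, positive definite function on $\mathbb{R}^n$ (in particular on any open neighborhood of $\mathbf{x}^*$) whose derivative $\dot V$ is negative definite there, so Theorem \ref{lya} immediately yields that $\mathbf{x}^*$ is asymptotically stable (and in particular Lyapunov stable). What remains, and what the radial unboundedness condition \eqref{crcv} must deliver, is that the region of attraction is all of $\mathbb{R}^n$; i.e.\ every trajectory, no matter where it starts, converges to $\mathbf{x}^*$.

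For the global part I would fix an arbitrary initial condition $\mathbf{x}(0) = \mathbf{x}_0$, set $c := V(\mathbf{x}_0)$, and study the sublevel set $\Omega_c := \{\, \mathbf{x} \in \mathbb{R}^n \mid V(\mathbf{x}) \leq c \,\}$. The first key step is to show $\Omega_c$ is compact: it is closed because $V$ is continuous, and bounded because $V$ is coercive by \eqref{crcv} (otherwise one could produce a sequence $\mathbf{x}_n \in \Omega_c$ with $\parallel \mathbf{x}_n - \mathbf{x}^* \parallel \to \infty$, forcing $V(\mathbf{x}_n) \to \infty$ and contradicting $V(\mathbf{x}_n) \leq c$). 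The second key step is positive invariance of $\Omega_c$: along any trajectory, $\frac{d}{dt} V(\mathbf{x}(t)) = \dot V(\mathbf{x}(t)) \leq 0$ by \eqref{dynamics}, so $V$ is nonincreasing along the flow and a solution starting in $\Omega_c$ never leaves it. Compactness and positive invariance together confine the trajectory to a fixed compact set, hence it is bounded and, by the standard continuation argument for locally Lipschitz fields, extends to a solution defined for all $t \geq 0$.

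The convergence argument is the heart of the proof. Since $t \mapsto V(\mathbf{x}(t))$ is nonincreasing and bounded below by $0$, it converges to some limit $c^* \geq 0$, and I claim $c^* = 0$. If not, the trajectory remains forever in $K := \{\, \mathbf{x} \in \Omega_c \mid V(\mathbf{x}) \geq c^* \,\}$, which is compact and, crucially, excludes $\mathbf{x}^*$ since $V(\mathbf{x}^*) = 0 < c^*$. On the compact set $K$ the continuous function $\dot V$ is strictly negative by \eqref{dynamics}, so it attains a maximum $-\gamma < 0$, whence $\dot V \leq -\gamma$ throughout $K$; integrating gives $V(\mathbf{x}(t)) \leq V(\mathbf{x}_0) - \gamma t \to -\infty$, contradicting $V \geq 0$. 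Thus $c^* = 0$. To pass from $V(\mathbf{x}(t)) \to 0$ to $\mathbf{x}(t) \to \mathbf{x}^*$, I would invoke the $\omega$-limit set: boundedness makes it nonempty, and any point $\mathbf{p}$ in it satisfies $V(\mathbf{p}) = c^* = 0$ by continuity of $V$ along a convergent subsequence, so positive definiteness \eqref{pos-def} forces $\mathbf{p} = \mathbf{x}^*$; the $\omega$-limit set is therefore $\{\mathbf{x}^*\}$ and $\mathbf{x}(t) \to \mathbf{x}^*$. Since $\mathbf{x}_0$ was arbitrary, the region of attraction is all of $\mathbb{R}^n$.

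I expect the main obstacle to be the role of coercivity in controlling the global geometry of the sublevel sets, which enters in two essential places: it forces compactness of $\Omega_c$ (without which the trajectory need not be bounded and the maximum of $\dot V$ over $K$ need not exist), and it underlies the boundedness that makes the final $\omega$-limit argument work. The delicate point worth stating carefully is precisely that last step: $V(\mathbf{x}(t)) \to 0$ alone does not immediately give $\mathbf{x}(t) \to \mathbf{x}^*$, because a small sublevel set of $V$ may a priori be disconnected, which is why the $\omega$-limit set argument (leveraging the boundedness obtained from compactness) is the clean route. Note also that the strict inequality in \eqref{dynamics} is what lets this direct contradiction argument replace an appeal to LaSalle's invariance principle.
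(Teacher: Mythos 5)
The paper does not prove Theorem \ref{krasov} at all: it is the classical Barbashin--Krasovskii theorem, stated in section \ref{lpv-theory} purely as background, with the reader directed to the cited references (Khalil, Vidyasagar, and Barbashin--Krasovskii's original paper) for a proof. So there is no internal argument to compare yours against; judged on its own merits, your proof is correct, and it is essentially the standard textbook argument found in those references. Your decomposition into a local part (Theorem \ref{lya} applied with $D = \mathbb{R}^n$) and a global part is the right one, and coercivity \eqref{crcv} enters exactly where you say it does: it makes the sublevel set $\Omega_c$ compact, which in turn yields positive invariance, forward completeness of the solution, and the existence of a strictly negative maximum $-\gamma$ of the continuous function $\dot V$ on the compact set $K$, from which the integration argument forcing $c^* = 0$ follows. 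The $\omega$-limit set argument closing the proof is also sound. One minor simplification worth noting: you can avoid $\omega$-limit sets entirely by observing that for any $\varepsilon > 0$ the quantity $m := \min\{V(\mathbf{x}) \mid \mathbf{x} \in \Omega_c,\; \parallel \mathbf{x} - \mathbf{x}^* \parallel \,\geq\, \varepsilon\}$ is strictly positive (compactness of that set plus \eqref{pos-def}), so $V(\mathbf{x}(t)) \to 0$ together with $\mathbf{x}(t) \in \Omega_c$ forces $\parallel \mathbf{x}(t) - \mathbf{x}^* \parallel \,<\, \varepsilon$ for all large $t$; the disconnectedness of small sublevel sets that you flag as the delicate point is harmless precisely because the trajectory never leaves the compact set $\Omega_c$, inside which smallness of $V$ does imply closeness to $\mathbf{x}^*$.
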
 \\
With respect to the global asymptotic stability, a continuously differentiable function $ V : D \rightarrow \mathbb{R} $ satisfying 
\eqref{pos-def} and \eqref{crcv} is called a Lyapunov candidate. Whereas, a continuously differentiable function 
$ V : D \rightarrow \mathbb{R} $ satisfying all conditions in the Theorem-\ref{krasov} is called a Lyapunov function. \\
\indent We discuss below LaSalle's invariance principle. We state two special cases or corollaries of LaSalle's invariance principle; reader can refer to \cite{khalil} and \cite{vidya} for the general statement of LaSalle's invariance principle.
\begin{proposition}
 Let $\mathbf{x}^*$ be an equilibrium point of \eqref{auto}. Let $ V: D \rightarrow 
 \mathbb{R} $, where $D \subseteq \mathbb{R}^n$ is an open set containing $\mathbf{x}^*$, be a continuously 
 differentiable function satisfying \eqref{lya can} and \eqref{lya fn}. Let $S:= \{\mathbf{x} \in D \mid \dot{V}(\mathbf{x})=0\}$ and suppose that no solution can stay identically in $S$, other than the trivial solution $\mathbf{x}(t)\equiv\mathbf{x}^*$. Then, $\mathbf{x}^*$ is an asymptotically stable equilibrium point of \eqref{auto}.
\end{proposition}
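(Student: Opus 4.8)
The plan is to prove asymptotic stability in two stages: first observe that stability comes essentially for free, then upgrade it to convergence via a limit-set argument. Since $V$ satisfies both \eqref{lya can} and \eqref{lya fn}, Theorem \ref{lya} immediately guarantees that $\mathbf{x}^*$ is a stable equilibrium point of \eqref{auto}; so it remains only to exhibit a neighborhood of $\mathbf{x}^*$ from which every trajectory converges to $\mathbf{x}^*$.

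First I would trap trajectories in a compact, positively invariant set. Choose $\varepsilon>0$ small enough that $\overline{B(\mathbf{x}^*,\varepsilon)}\subset D$. By continuity and positive definiteness of $V$, the quantity $m:=\min_{\mathbf{x}\in S(\mathbf{x}^*,\varepsilon)}V(\mathbf{x})$ is strictly positive; fixing any $c\in(0,m)$, the sublevel set $\Omega_c:=\{\mathbf{x}\in\overline{B(\mathbf{x}^*,\varepsilon)}\mid V(\mathbf{x})\leq c\}$ is compact, contained in the open ball $B(\mathbf{x}^*,\varepsilon)$, and contains a full neighborhood of $\mathbf{x}^*$ (since $V(\mathbf{x}^*)=0<c$). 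Because \eqref{lya fn} makes $t\mapsto V(\mathbf{x}(t))$ nonincreasing, a trajectory starting in $\Omega_c$ cannot raise its $V$-value above $c$ and therefore cannot reach the sphere $S(\mathbf{x}^*,\varepsilon)$, where $V\geq m>c$; hence $\Omega_c$ is positively invariant and (being confined to a compact set) each such trajectory exists and remains in $\Omega_c$ for all $t\geq0$.

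Next I would analyze the positive limit set. Fix $\mathbf{x}_0\in\Omega_c$ with trajectory $\mathbf{x}(t)$. Since $V(\mathbf{x}(t))$ is nonincreasing and bounded below by $0$, it converges to some $a\geq0$. Because the trajectory is bounded, its positive limit set $L^+$ (the collection of all subsequential limits $\lim_{n}\mathbf{x}(t_n)$ with $t_n\to\infty$) is nonempty, compact, invariant, and attracts the trajectory, i.e. $\operatorname{dist}(\mathbf{x}(t),L^+)\to0$ as $t\to\infty$. Continuity of $V$ forces $V\equiv a$ on $L^+$, and invariance of $L^+$ then forces $\dot V\equiv0$ along every solution lying in $L^+$, so that $L^+\subseteq S$.

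Finally I would invoke the standing hypothesis. Since $L^+$ is invariant and contained in $S$, the solution through any point of $L^+$ stays identically in $S$, which by assumption can only be the trivial solution $\mathbf{x}(t)\equiv\mathbf{x}^*$; hence $L^+=\{\mathbf{x}^*\}$. Therefore $\mathbf{x}(t)\to\mathbf{x}^*$ for every $\mathbf{x}_0\in\Omega_c$, and since $\Omega_c$ contains some ball $B(\mathbf{x}^*,\delta)$, this convergence holds from a neighborhood of $\mathbf{x}^*$. Combined with the stability already established, this proves that $\mathbf{x}^*$ is asymptotically stable. I expect the main obstacle to be the rigorous justification of the properties of the positive limit set, nonemptiness, compactness, and especially invariance, since invariance is exactly what makes the ``no solution stays in $S$'' hypothesis usable; all of these rest on the boundedness of the trajectory secured in the second step.
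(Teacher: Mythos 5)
Your proof is correct, and there is nothing to compare it against line by line: the paper does not prove this proposition at all, stating it as a known special case of LaSalle's invariance principle and deferring to \cite{khalil} and \cite{vidya}. Your argument --- stability from Theorem \ref{lya}, trapping the trajectory in the compact positively invariant sublevel set $\Omega_c$, showing the positive limit set $L^+$ satisfies $V\equiv a$ and hence $L^+\subseteq S$, and then using the ``no solution stays identically in $S$'' hypothesis to collapse $L^+$ to $\{\mathbf{x}^*\}$ --- is exactly the standard proof found in those references, and the limit-set properties you flag as the main obstacle (nonemptiness, compactness, invariance, attraction of a bounded trajectory) are indeed where the local Lipschitz assumption on $f$ is used, via uniqueness and continuous dependence on initial conditions.
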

\begin{proposition}
 Let $\mathbf{x}^*$ be an equilibrium point of \eqref{auto}. Let $V:\mathbb{R}^n \rightarrow 
 \mathbb{R}$ be a continuously differentiable function satisfying \eqref{pos-def}, \eqref{crcv}, and $\dot{V}(\mathbf{x}) \leq 0$ for all $\mathbf{x} \in \mathbb{R}^n$. Let $S:= \{\mathbf{x} \in \mathbb{R}^n \mid \dot{V}(\mathbf{x})=0\}$ and suppose that no solution can stay identically in $S$, other than the trivial solution $\mathbf{x}(t)\equiv\mathbf{x}^*$. Then, $\mathbf{x}^*$ is the globally asymptotically stable equilibrium point of \eqref{auto}.
\end{proposition}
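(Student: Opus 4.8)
The plan is to deduce this statement from the general form of LaSalle's invariance principle (see \cite{khalil}, \cite{vidya}), whose hypotheses require a compact, positively invariant set on which $\dot{V} \leq 0$; the coercivity assumption \eqref{crcv} is precisely what furnishes such sets as sublevel sets of $V$. First I would fix an arbitrary initial condition $\mathbf{x}(0) = \mathbf{x}_0 \in \mathbb{R}^n$, set $c := V(\mathbf{x}_0)$, and consider the sublevel set $\Omega_c := \{\mathbf{x} \in \mathbb{R}^n \mid V(\mathbf{x}) \leq c\}$. I would show $\Omega_c$ is compact: it is closed because $V$ is continuous, and bounded because \eqref{crcv} forces $V(\mathbf{x}^* + \mathbf{z}) \rightarrow \infty$ as $\parallel \mathbf{z} \parallel \rightarrow \infty$, so no unbounded sequence can remain in a sublevel set. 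Next, since $\dot{V}(\mathbf{x}) \leq 0$ everywhere, $V$ is nonincreasing along every trajectory; hence a solution starting in $\Omega_c$ cannot leave it, which gives positive invariance of $\Omega_c$. In particular the trajectory remains bounded and is therefore defined for all $t \geq 0$.

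I would then apply LaSalle's invariance principle on $\Omega_c$: every solution starting in $\Omega_c$ approaches, as $t \rightarrow \infty$, the largest invariant set $M$ contained in $E := \{\mathbf{x} \in \Omega_c \mid \dot{V}(\mathbf{x}) = 0\} = \Omega_c \cap S$. The key step, and the one that invokes the standing hypothesis, is the identification $M = \{\mathbf{x}^*\}$. Any point of $M$ lies on a complete trajectory that remains in $E \subseteq S$ for all time; by assumption the only solution staying identically in $S$ is $\mathbf{x}(t) \equiv \mathbf{x}^*$, so $M \subseteq \{\mathbf{x}^*\}$. Conversely, $f(\mathbf{x}^*) = \mathbf{0}$ yields $\dot{V}(\mathbf{x}^*) = 0$, and $\{\mathbf{x}^*\}$ is trivially invariant, so $\mathbf{x}^* \in E$ and $M = \{\mathbf{x}^*\}$. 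Hence the chosen trajectory satisfies $\displaystyle\lim_{t \rightarrow \infty} \mathbf{x}(t) = \mathbf{x}^*$.

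Finally I would assemble the two ingredients of global asymptotic stability. Stability of $\mathbf{x}^*$ follows directly from the Lyapunov theorem (Theorem \ref{lya}), since $V$ satisfies \eqref{pos-def} and $\dot{V} \leq 0$. Global attractivity follows because $\mathbf{x}_0$ was arbitrary: every $\mathbf{x}_0 \in \mathbb{R}^n$ lies in some sublevel set $\Omega_c$, so its trajectory converges to $\mathbf{x}^*$, and therefore the region of attraction is all of $\mathbb{R}^n$. Together these establish that $\mathbf{x}^*$ is the globally asymptotically stable equilibrium point of \eqref{auto}. I expect the main obstacle to be the verification that the largest invariant set in $S$ reduces to $\{\mathbf{x}^*\}$: this is where the nontrivial hypothesis genuinely enters, and where one must argue via complete (two-sided) trajectories contained in $S$ rather than merely forward orbits.
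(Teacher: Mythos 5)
Your proof is correct and follows exactly the standard derivation from the general form of LaSalle's invariance principle: compact sublevel sets via the coercivity condition \eqref{crcv}, positive invariance via $\dot{V}\leq 0$, identification of the largest invariant set in $S$ with $\{\mathbf{x}^*\}$ using the hypothesis on solutions staying in $S$, and stability from Theorem \ref{lya}. The paper itself offers no proof of this proposition---it states it as a special case of LaSalle's invariance principle and defers to \cite{khalil} and \cite{vidya}---and your argument is precisely the one found in those references, so your proposal matches the intended justification.
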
\\
In general, finding the set $S$ numerically is a daunting task. Only in few simple cases like inverted pendulum with the energy function as a Lyapunov candidate $V$, the set $S$ can be easily found. Therefore, though theoretically these special cases of LaSalle's invariance principle are very interesting results, it is extremely difficult to apply them in practice.

\section{h function}\label{h-function}
\indent Consider a Lyapunov candidate $V$ corresponding to the globally asymptotically stable equilibrium point $\mathbf{x}^*$ of \eqref{auto}. Let $\mathbf{d} \in S(\mathbf{0},1)$, now  $V\vert_{\{\mathbf{x}^*+\gamma\mathbf{d}\mid \gamma \geq 0\}}$ is a function of one variable $\gamma$. Let us denote this one variable function by $k_d:\mathbb{R}_+ \rightarrow \mathbb{R}$, i.e.
\begin{equation}
 k_d(\gamma) := V(\mathbf{x}^* + \gamma \mathbf{d})\,.
\end{equation}
We define the function $h: S(\mathbf{0},1) \rightarrow \mathbb{R}_{++} \cup \{\infty\}$ as follows:
\begin{equation}
 h(\mathbf{d}) := \mbox{ minimum } \gamma \in \mathbb{R}_{++} \mbox{ which satisfies } k_d'(\gamma) = 0\,.
\end{equation}
If such a $\gamma$ does not exist, i.e. if $V\vert_{\{\mathbf{x}^*+\gamma\mathbf{d}\mid \gamma \geq 0\}}$ is a strictly increasing function without an inflection point, then we declare $h(\mathbf{d})=\infty$. \\
\indent For every direction point $\mathbf{d} \in S(\mathbf{0},1)$ for which $h(\mathbf{d}) < \infty$, we define the corresponding point $\mathbf{z}_d$ as follows:
\begin{equation}\label{zd}
 \mathbf{z}_d := \mathbf{x}^* + h(\mathbf{d})\,\mathbf{d}\,.
\end{equation}
We will use the above definition in next section to state our main result. From the above equation it is clear that, $\parallel \mathbf{z}_d - \mathbf{x}^* \parallel \,=\, h(\mathbf{d})$. It is apparent from the definition of functions $k_d$ and $h$ that, for every $\mathbf{d} \in S(\mathbf{0},1)$ for which $h(\mathbf{d}) < \infty$, we have $\langle \nabla V(\mathbf{z}_d),\, \mathbf{d} \rangle = 0$; but note that, $\nabla V(\mathbf{z}_d)$ need not be zero (refer Figure \ref{pr}).
\begin{figure}[ht]
  \begin{center}
    \psfrag{x^*}{{\scriptsize \textcolor{blue}{$\mathbf{x}^*$}}}
    \psfrag{u}{{\scriptsize \textcolor{blue}{$\mathbf{d}$}}}
    \psfrag{span{u}}{{\scriptsize \textcolor{blue}{Span$\{\mathbf{d}\}$}}}
    \psfrag{0}{{\scriptsize \textcolor{blue}{$\mathbf{0}$}}}
    \psfrag{z_u=x^*+alp_u u}{{\scriptsize \textcolor{blue}{$\mathbf{z_d}=\mathbf{x}^*+h(\mathbf{d})\, \mathbf{d}$}}}
    \psfrag{g_v}{{\scriptsize \textcolor{blue}{$\nabla V(\mathbf{z}_d)$}}}
    \includegraphics[scale=0.42]{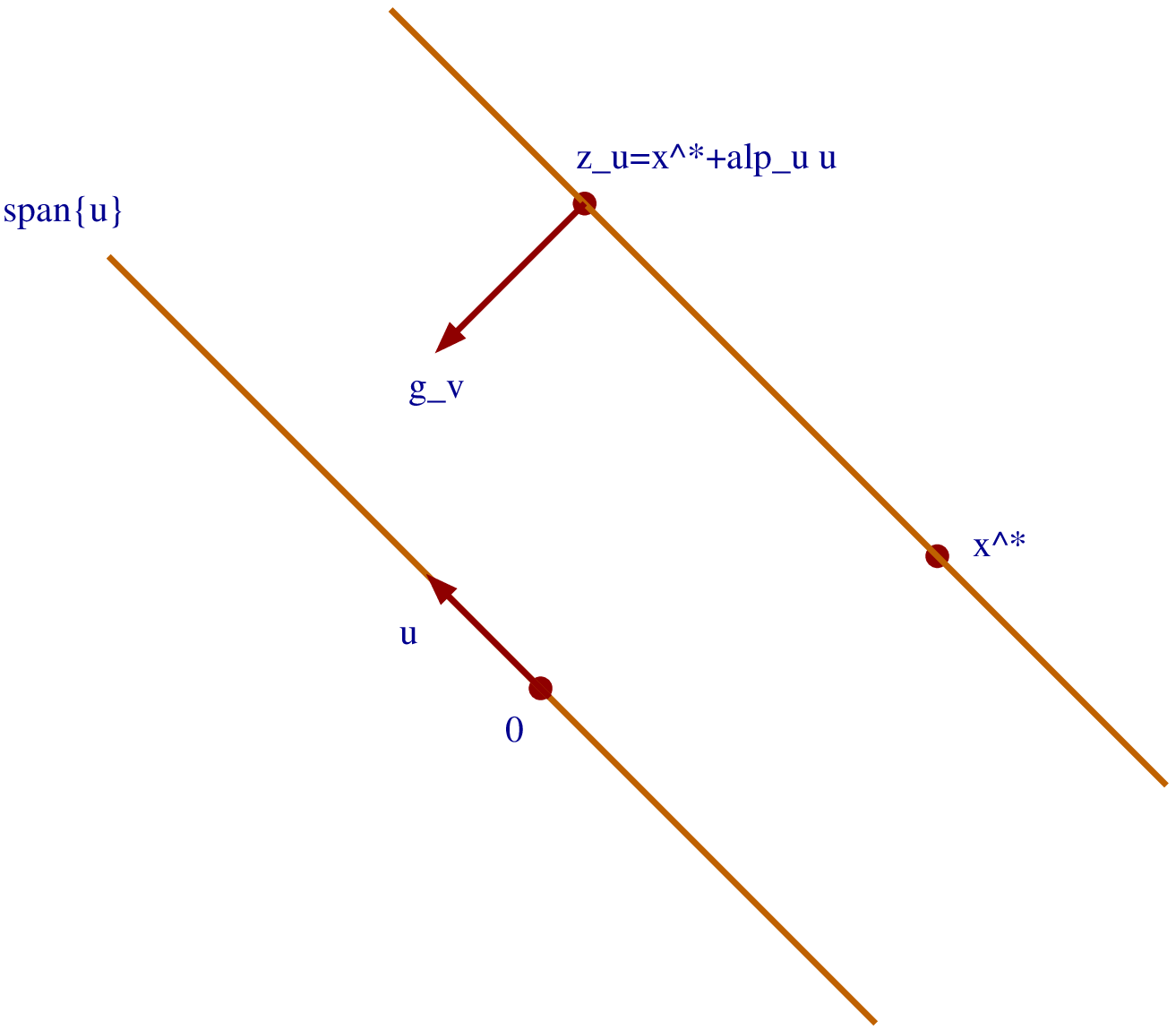}
    \caption{ }
    \label{pr}
    \end{center}
\end{figure}

\section{Necessary Condition on a Lyapunov Function Corresponding to the Globally Asymptotically Stable Equilibrium Point}\label{main-result} 
\indent  In this section, we propose a new necessary condition on a Lyapunov function corresponding to the globally asymptotically stable equilibrium point. This new necessary condition is numerically easier to check compare to the known necessary conditions given in \eqref{pos-def} and \eqref{crcv}.\\
\indent We give below a sufficient condition under which there exists a point $\mathbf{z} \neq \mathbf{x}^*$ such that $\nabla V(\mathbf{z})=0$, where $V: \mathbb{R}^n \rightarrow \mathbb{R}$ is a Lyapunov candidate corresponding to the globally asymptotically stable equilibrium point $\mathbf{x}^*$. If such a point exists, then $V$ will not satisfy the condition \eqref{dynamics} in Theorem \ref{krasov}; and hence it cannot be a Lyapunov function. The sufficient condition for existence of a point $\mathbf{z} \neq \mathbf{x}^*$, such that $\nabla V(\mathbf{z})=0$, gives a necessary condition for $V: \mathbb{R}^n \rightarrow \mathbb{R}$ to be a Lyapunov function corresponding to the globally asymptotically stable equilibrium point $\mathbf{x}^*$.
\begin{theorem}\label{chimu}
 Suppose  $V:\mathbb{R}^n \rightarrow \mathbb{R}$ is a Lyapunov candidate corresponding to the globally asymptotically stable equilibrium point $\mathbf{x}^*$ of \eqref{auto}, which satisfies the following: 
 \begin{equation}
  \inf\, \{h(\mathbf{d}) \mid \mathbf{d} \in S(\mathbf{0},1)\} < \infty \,.
 \end{equation}
 Then for any local minimizer $\mathbf{w} \in S(\mathbf{0},1)$ of $h$ with $h(\mathbf{w}) < \infty$, we have $\nabla V(\mathbf{z}_w)=0$.
\end{theorem}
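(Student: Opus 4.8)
The plan is to argue by contradiction. After translating so that $\mathbf{x}^*=\mathbf{0}$, write $\gamma_0:=h(\mathbf{w})<\infty$, $\mathbf{z}_w=\gamma_0\mathbf{w}$, and $\mathbf{g}:=\nabla V(\mathbf{z}_w)$. The definition of $h$ already gives the radial component for free, namely $\langle\mathbf{g},\mathbf{w}\rangle=k_w'(\gamma_0)=0$; hence the only thing to rule out is a nonzero tangential component, so I would suppose $\mathbf{g}\neq\mathbf{0}$, which forces $\mathbf{g}\perp\mathbf{w}$, and seek a contradiction with the local minimality of $h$ at $\mathbf{w}$. Before perturbing, I would record the structural fact that $k_w$ is strictly increasing on $[0,\gamma_0)$: indeed $k_w(0)=V(\mathbf{0})=0$, $k_w>0$ on $(0,\gamma_0]$ by positive definiteness, and $k_w'$ has no zero on $(0,\gamma_0)$ since $\gamma_0$ is the first one, so the continuous $k_w'$ keeps the positive sign it must have near $0$. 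Thus $V(\gamma\mathbf{w})<c_0:=V(\mathbf{z}_w)$ for every $\gamma\in(0,\gamma_0)$, and the ray in direction $\mathbf{w}$ meets the level set $\{V=c_0\}$ tangentially at $\mathbf{z}_w$, the tangency being exactly the relation $\mathbf{g}\perp\mathbf{w}$.

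The core of the argument is a one-parameter perturbation of the direction inside the plane $\Pi=\mathrm{span}\{\mathbf{w},\mathbf{g}\}$. Writing $\hat{\mathbf{g}}=\mathbf{g}/\|\mathbf{g}\|$ and $\mathbf{d}_\theta=\cos\theta\,\mathbf{w}+\sin\theta\,\hat{\mathbf{g}}\in S(\mathbf{0},1)$, I would study the one-variable restriction $k_{d_\theta}(\gamma)=V(\gamma\mathbf{d}_\theta)$ and its derivative $r_\theta(\gamma):=k_{d_\theta}'(\gamma)=\langle\nabla V(\gamma\mathbf{d}_\theta),\mathbf{d}_\theta\rangle$. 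Since $k_{d_\theta}$ increases near $\gamma=0$, we have $r_\theta(\gamma)>0$ for small $\gamma$; consequently, if for one sign of a small $\theta$ I can exhibit a radius $\gamma_1<\gamma_0$ at which $r_\theta(\gamma_1)\le0$, then the intermediate value theorem produces a first zero of $r_\theta$ in $(0,\gamma_1]$, i.e.\ a radial critical point, whence $h(\mathbf{d}_\theta)\le\gamma_1<\gamma_0$, contradicting local minimality. The driving mechanism is that the nonzero tangential gradient makes $V$ strictly increase in the $\hat{\mathbf{g}}$ direction at $\mathbf{z}_w$: to leading order $V(\gamma_0\mathbf{d}_\theta)=c_0+\gamma_0\theta\|\mathbf{g}\|+o(\theta)$, so tilting on the appropriate side pushes the ray above the level $c_0$ while along $\mathbf{w}$ the profile was already leveling off, which should force the first radial critical point inward.

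The step I expect to be the main obstacle is making this sign control rigorous under the hypothesis that $V$ is only continuously differentiable. The trouble is that $\nabla V$ near $\mathbf{z}_w$ is merely continuous, so $r_\theta(\gamma_0)\to r_0(\gamma_0)=0$ as $\theta\to0$ and its sign is not delivered by a naive first-order Taylor expansion, which would require second-order information on $V$. To get around this I would combine two ingredients. First, the strict increase of $k_w$ together with the uniform continuity of $\nabla V$ on a compact neighborhood keeps $r_\theta$ positive and bounded away from $0$ on initial sub-intervals $[0,\gamma_0-\delta]$, thereby localizing any possible sign change to radii near $\gamma_0$. Second, the local-minimality hypothesis itself forces $r_\theta(\gamma)\ge0$ on a full neighborhood of $\mathbf{w}$ for $\gamma\le\gamma_0$, which reduces the claim to showing that such one-sided positivity, together with $r_0(\gamma_0)=0$, is incompatible with $\mathbf{g}\neq\mathbf{0}$. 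The genuinely delicate situation is the degenerate tangency, in which the radial derivative along $\mathbf{w}$ flattens to zero slowly as $\gamma\uparrow\gamma_0$; there I would appeal to the auxiliary properties of $h$ proved in the appendix, in particular its semicontinuity and the behaviour of $\mathbf{z}_d$ as $\mathbf{d}\to\mathbf{w}$, to close the argument and conclude $\mathbf{g}=\nabla V(\mathbf{z}_w)=\mathbf{0}$.
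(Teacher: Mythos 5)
Your proposal follows the paper's skeleton faithfully up to its decisive step: argue by contradiction, use $\langle\nabla V(\mathbf{z}_w),\mathbf{w}\rangle=0$ to get $\mathbf{g}\perp\mathbf{w}$, restrict to the two--dimensional plane $\mathrm{span}\{\mathbf{w},\mathbf{g}\}$ (the paper's $\mathbb{V}$), perturb the direction inside that plane, and contradict local minimality of $h$. Your IVT reduction is also sound: if for some small tilt one can exhibit $\gamma_1<\gamma_0$ with $r_\theta(\gamma_1)\le 0$, then indeed $h(\mathbf{d}_\theta)\le\gamma_1<h(\mathbf{w})$, a contradiction. The genuine gap is that this sign condition is never established, and it is not a technical loose end --- it is the entire content of the theorem. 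You flag it yourself (``the step I expect to be the main obstacle''), but neither of your two repair ingredients closes it. Uniform continuity of $\nabla V$ on a compact set only confines the possible sign change to radii near $\gamma_0$, which is precisely where the problem lives. The second ingredient is circular: ``local minimality forces $r_\theta\ge 0$ for $\gamma\le\gamma_0$, which reduces the claim to showing that such one-sided positivity together with $r_0(\gamma_0)=0$ is incompatible with $\mathbf{g}\neq\mathbf{0}$'' --- that ``reduced'' statement \emph{is} the theorem; nothing has been reduced. Finally, the appendix lemmas concern (semi)continuity of $h$ at inflection-type directions; they carry no sign information about $r_\theta$ and cannot substitute for the missing estimate.

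Moreover, your driving mechanism tilts to the wrong side and would not force anything even heuristically. You tilt toward $+\hat{\mathbf{g}}$, so that $V(\gamma_0\mathbf{d}_\theta)\approx c_0+\gamma_0\theta\|\mathbf{g}\|>c_0$; but a larger value at radius $\gamma_0$ is perfectly compatible with $k_{d_\theta}$ being strictly increasing on all of $(0,\gamma_0]$, hence with $h(\mathbf{d}_\theta)\ge\gamma_0$ --- no inward critical point is forced. Indeed, to first order the radial derivative on that side is $r_\theta(\gamma_0)\approx\|\mathbf{g}\|\sin\theta>0$, the wrong sign for your IVT step. The paper tilts to the \emph{opposite} side, toward $-\nabla V(\mathbf{z}_w)$ (directions in $\mathcal{H}_1\cap(-\mathcal{H}_2)$), and its mechanism is a value comparison rather than a derivative comparison: the arc-monotonicity claim \eqref{arc} is applied to the isoceles configuration $\mathbf{x}_v,\mathbf{y}_v$ on the tilted ray (equidistant from $\mathbf{z}_w$, with $\mathbf{x}_v$ strictly inside the sphere $S(\mathbf{x}^*,h(\mathbf{w}))$) to get $V(\mathbf{x}_v)\ge V(\mathbf{y}_v)$; since $k_v$ increases from $0$, this forces an interior local maximum of $k_v$, i.e.\ a critical radius strictly less than $h(\mathbf{w})$. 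That comparison lemma is exactly the piece your argument lacks and does not replace. Your instinct that mere continuous differentiability makes the sign control delicate is legitimate --- it is the same difficulty that \eqref{arc} (justified in the paper only by a footnote on Taylor expansion and the mean value theorem) is meant to dispatch --- but identifying a difficulty and deferring it to ``auxiliary properties'' that do not address it leaves the proof incomplete.
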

 \begin{proof}
 Let $\mathbf{w} \in S(\mathbf{0},1)$ be a local minimizer of $h$ with $h(\mathbf{w}) < \infty$. Then with respect to induced topology on $S(\mathbf{0},1)$, there exists a neighborhood $N_{\varepsilon}(\mathbf{w}) := B(\mathbf{w}, \varepsilon) \cap S(\mathbf{0},1)$ of $\mathbf{w}$ for some $\varepsilon > 0$ such that:
 \begin{equation}\label{nrst}
 h(\mathbf{w}) \; \leq \; h(\mathbf{u})\,, \; \forall\;\mathbf{u} \in N_{\varepsilon}(\mathbf{w})\,.
\end{equation}
\indent Suppose $\nabla V(\mathbf{z}_w) \neq \mathbf{0}$. As $h(\mathbf{w}) < \infty$, we have $\langle \nabla V(\mathbf{z}_w),\, \mathbf{w} \rangle = 0$. Let us define some open half-spaces which we need later:
\begin{eqnarray}
 \mathcal{H}_1 &:=& \{\mathbf{x} \in \mathbb{R}^n \mid \langle \nabla V(\mathbf{z}_w),\, \mathbf{x} \rangle \,<\, 0\}\,,  \\
 \mathcal{H}_2 &:=& \{\mathbf{x} \in \mathbb{R}^n \mid \langle \mathbf{w},\, \mathbf{x} \rangle \,\leq \, 0\}\,. 
 \end{eqnarray}
 Directional derivative of $V$ at $\mathbf{z}_w$ in every direction $\mathbf{u} \in (\mathcal{H}_1 \cap \mathcal{H}_2)$ is negative. 
 \begin{figure}[ht]
  \begin{center}
    \psfrag{x^*}{{\scriptsize \textcolor{blue}{$\mathbf{x}^*$}}}
    \psfrag{z_w}{{\scriptsize \textcolor{blue}{$\mathbf{z}_w$}}}
    \psfrag{w}{{\scriptsize \textcolor{blue}{$\mathbf{w}$}}}
    \psfrag{0}{{\scriptsize \textcolor{blue}{$\mathbf{0}$}}}
    \psfrag{grad V(z_w)}{{\scriptsize \textcolor{blue}{$\nabla V(\mathbf{z}_w)$}}}
    \psfrag{d_o}{{\scriptsize \textcolor{blue}{$\mathbf{d}_{\theta}$}}} 
    \psfrag{th}{{\scriptsize \textcolor{blue}{$\theta$}}}
   \includegraphics[scale=0.41]{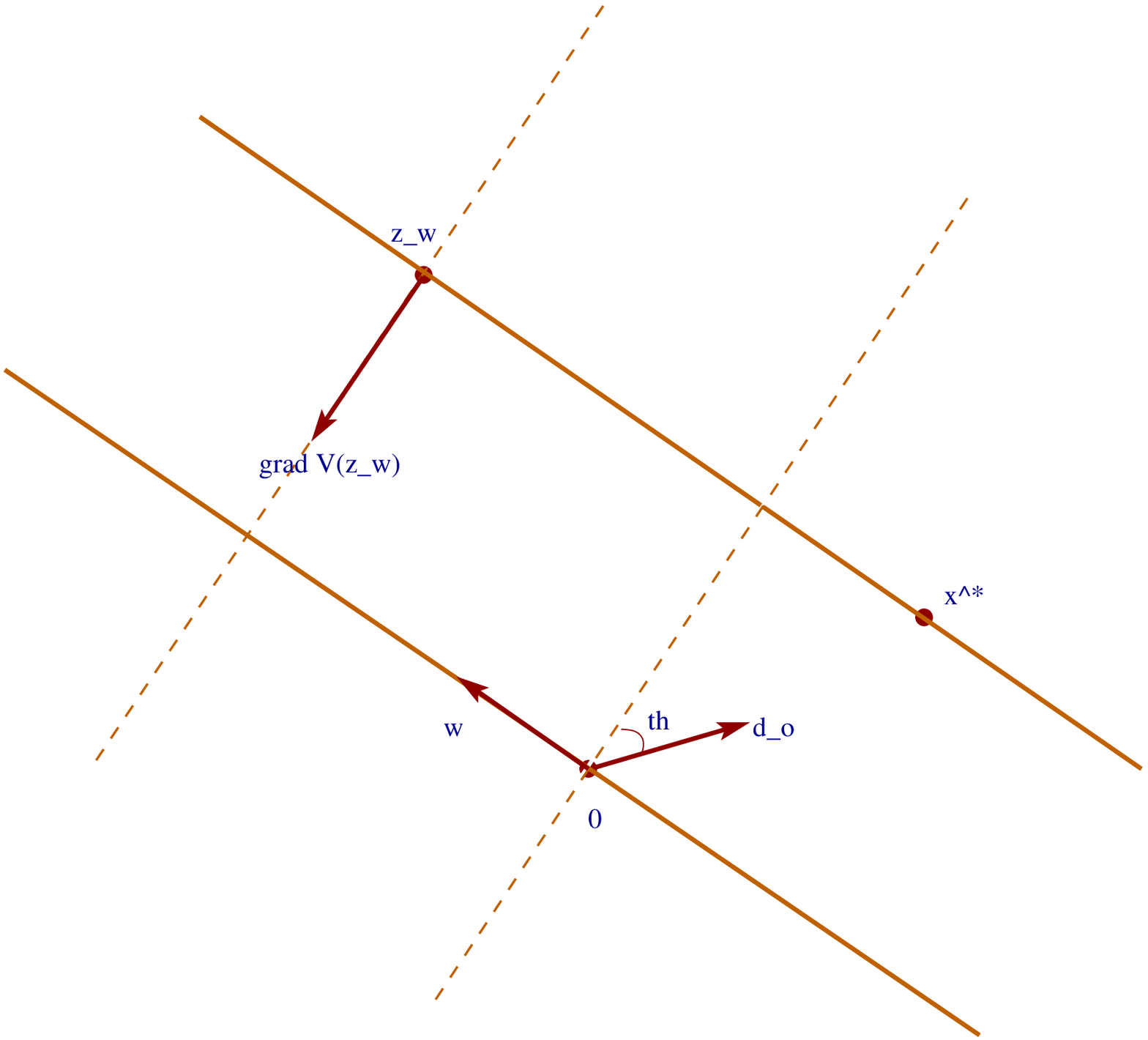}
   \caption{ }
   \label{mprf}
  \end{center}
\end{figure}
Consider a two dimensional subspace $\mathbb{V} := \mbox{Span} \{\nabla V(\mathbf{z}_w), \mathbf{w}\}$. Every direction $\mathbf{d}_{\theta} \in (\mathcal{H}_1 \cap \mathcal{H}_2 \cap \mathbb{V})$, with $\parallel \mathbf{d}_{\theta} \parallel \,=1$, can be parametrized by an angle $\theta$ ($0 \leq \theta < \pi/2$) it makes with $-\nabla V(\mathbf{z}_w)$ (refer Figure \ref{mprf}). $\langle \nabla V(\mathbf{z}_w),\, \mathbf{d}_{\theta} \rangle$ is negative and strictly increasing for $\theta \in [0,\, \pi/2)$. 
Therefore, as $V$ is a continuously differentiable function, $\exists\; \alpha >0$ such that for every $\beta \in (0,\,\alpha]$ following holds:\footnote{It follows from the Taylor series expansion and mean value theorem.} 
\begin{equation}\label{arc}
  V(\mathbf{z}_w + \beta \mathbf{d}_{\theta}) \mbox{ increases, as } \theta \mbox{ increases in } [0,\,\pi/2)\,.
\end{equation}
\indent Now consider a sequence of directions $(\mathbf{u}_n) \in \mathcal{H}_1 \cap (-\mathcal{H}_2) \cap \mathbb{V}$, with $\parallel \mathbf{u}_n \parallel = 1$, converging to $\mathbf{w}$. As $\displaystyle \lim_{n \rightarrow \infty} \mathbf{u}_n = \mathbf{w}$, there exists $n_o \in \mathbb{N}$ and $\gamma >0$ such that $\forall\; n \geq n_o$, $(\mathbf{x}^* + \gamma \mathbf{u}_n) \in B(\mathbf{z}_w, \alpha) \cap \mathbb{V}$ (refer Figure \ref{mprf1}).\footnote{Note that $S(\mathbf{z}_w,\alpha) \cap \mathbb{V}$ and $S(\mathbf{z}_w,\beta) \cap \mathbb{V}$ would be circles; but in Figure \ref{mprf1} and Figure \ref{mprf2}, we have shown only the arcs of these circles we are interested in.}
\begin{figure}[ht]
  \begin{center}
    \psfrag{x^*}{{\scriptsize \textcolor{blue}{$\mathbf{x}^*$}}}
    \psfrag{z_w}{{\scriptsize \textcolor{blue}{$\mathbf{z}_w$}}}
    \psfrag{w}{{\scriptsize \textcolor{blue}{$\mathbf{w}$}}}
    \psfrag{0}{{\scriptsize \textcolor{blue}{$\mathbf{0}$}}}
    \psfrag{grad V(z_w)}{{\scriptsize \textcolor{blue}{$\nabla V(\mathbf{z}_w)$}}}
    \psfrag{S(z_w,a)}{{\scriptsize \textcolor{blue}{$S(\mathbf{z}_w,\alpha) \cap \mathbb{V}$}}} 
    \psfrag{S(z_w,b)}{{\scriptsize \textcolor{blue}{$S(\mathbf{z}_w,\beta) \cap \mathbb{V}$}}}
    \psfrag{u_n}{{\scriptsize \textcolor{blue}{$\mathbf{u}_n\vert_ {n \geq n_o}$}}}
    \psfrag{x^*+gu_n}{{\scriptsize \textcolor{blue}{$(\mathbf{x}^*+\gamma \mathbf{u}_n)\vert_{n \geq n_o}$}}}
    \includegraphics[scale=0.45]{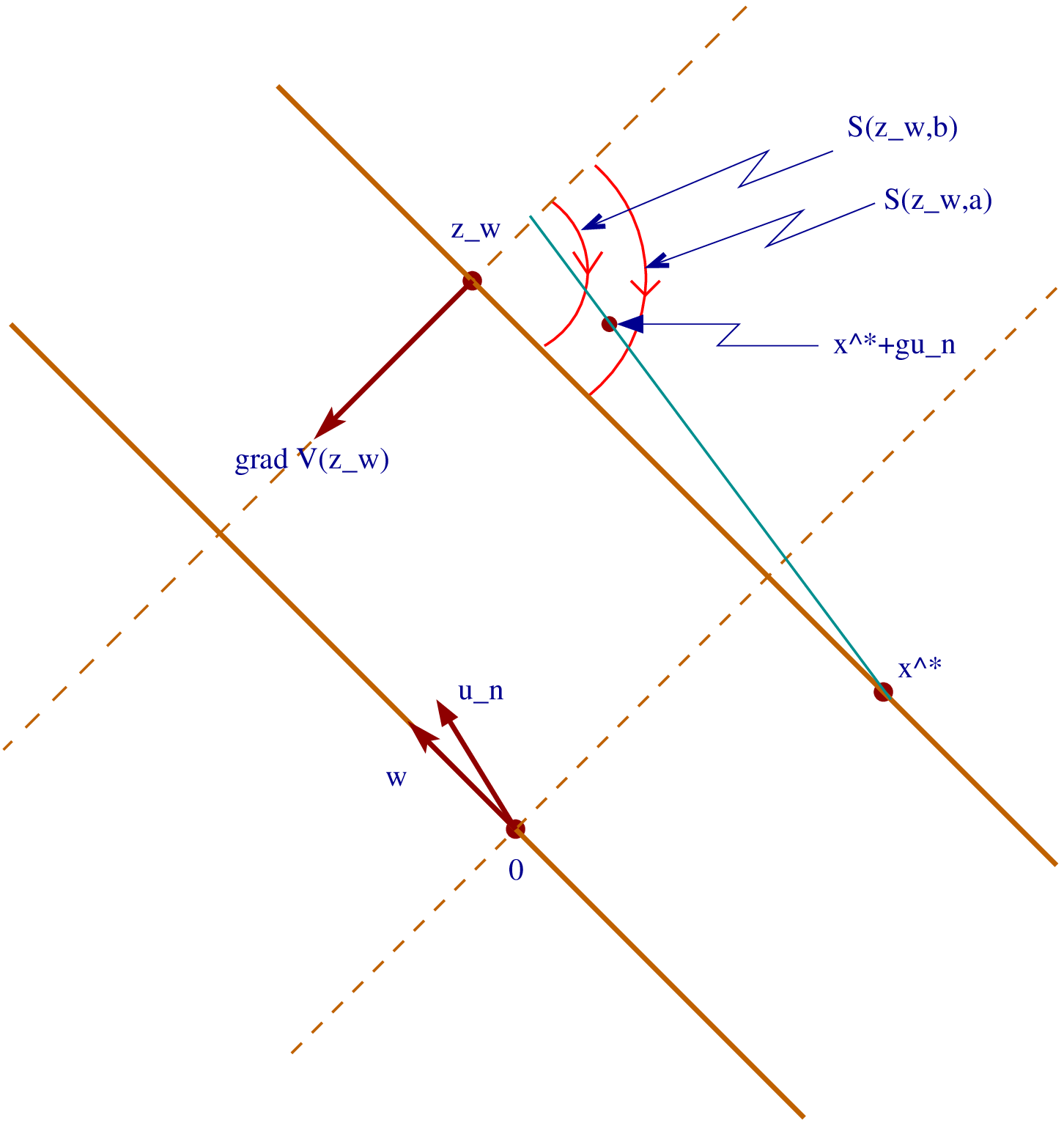}
    \caption{ } 
    \label{mprf1}
  \end{center}
\end{figure}
Consider an arbitrary direction $\mathbf{v} \in \{\mathbf{u}_n \mid n \geq n_o\}$. Let $\mathbf{y}_v$ be the point of intersection of $S(\mathbf{x}^*, h(\mathbf{w}))$ with the ray $\{\mathbf{x}^*+\gamma\,\mathbf{v} \;\vert\; \gamma > 0\}$; as shown in Figure \ref{mprf2}. 
\begin{figure}[ht]
  \begin{center}
    \psfrag{x^*}{{\scriptsize \textcolor{blue}{$\mathbf{x}^*$}}}
    \psfrag{z_w}{{\scriptsize \textcolor{blue}{$\mathbf{z}_w$}}}
    \psfrag{w}{{\scriptsize \textcolor{blue}{$\mathbf{w}$}}}
    \psfrag{0}{{\scriptsize \textcolor{blue}{$\mathbf{0}$}}}
    \psfrag{grad V(z_w)}{{\scriptsize \textcolor{blue}{$\nabla V(\mathbf{z}_w)$}}}
    \psfrag{S(x^*, z_w-x^*)}{{\scriptsize \textcolor{blue}{$S(\mathbf{x}^*, h(\mathbf{w})) \cap \mathbb{V}$}}}
    \psfrag{S(z_w,a)}{{\scriptsize \textcolor{blue}{$S(\mathbf{z}_w,\alpha) \cap \mathbb{V}$}}}
    \psfrag{v}{{\scriptsize \textcolor{blue}{$\mathbf{v}$}}}
    \psfrag{y_v}{{\scriptsize \textcolor{blue}{$\mathbf{y}_v$}}}
    \psfrag{x_v}{{\scriptsize \textcolor{blue}{$\mathbf{x}_v$}}}
    \includegraphics[scale=0.45]{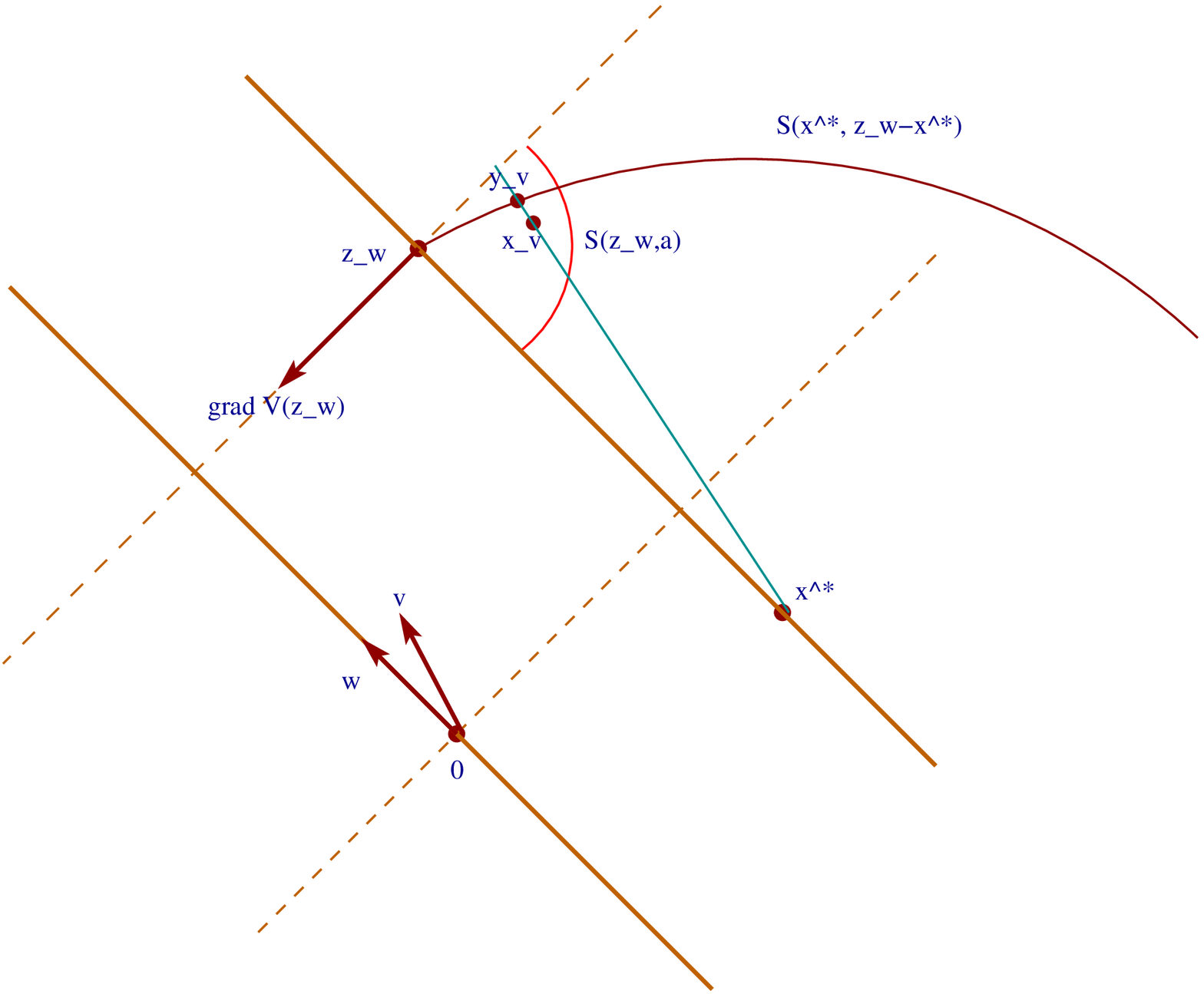}
    \caption{ }
    \label{mprf2}
  \end{center}
\end{figure} \\
\indent Now, consider a triangle in the plane $\mathbb{V}$ with vertices $\mathbf{z}_w,\, \mathbf{y}_v$, and $\mathbf{x}^*$. As shown in Figure \ref{circle}, angle at the vertex $\mathbf{y}_v$ in this triangle is outside the smaller semi-circle with the line segment $[\mathbf{x}^*,\, \mathbf{z}_w]$ as a diameter. 
Therefore, angle at the vertex $\mathbf{y}_v$ is less than $\pi/2$ and $\exists \; \mathbf{x}_v \in (\mathbf{x}^*,\,\mathbf{y}_v)$ such that, $\parallel \mathbf{x}_v - \mathbf{z}_w \parallel \;=\; \parallel \mathbf{y}_v - \mathbf{z}_w \parallel$. However, from \eqref{arc} we have, $V(\mathbf{x}_v) \geq V(\mathbf{y}_v)$.
\begin{figure}[ht]
  \begin{center}
    \psfrag{x^*}{{\scriptsize \textcolor{blue}{$\mathbf{x}^*$}}}
    \psfrag{z_w}{{\scriptsize \textcolor{blue}{$\mathbf{z}_w$}}}
    \psfrag{S(x^*, z_w-x^*)}{{\scriptsize \textcolor{blue}{$S(\mathbf{x}^*,h(\mathbf{w})) \cap \mathbb{V}$}}}
    \psfrag{y_v}{{\scriptsize \textcolor{blue}{$\mathbf{y}_v$}}}
    \psfrag{x_v}{{\scriptsize \textcolor{blue}{$\mathbf{x}_v$}}}
    \psfrag{S(x^*+z_w/2,z_w-x^*/2)}{{\tiny \textcolor{blue}{$S(\frac{\mathbf{x}^*+\mathbf{z}_w}{2},\frac{h(\mathbf{w})}{2}) \cap \mathbb{V}$}}}
    \includegraphics[scale=0.45]{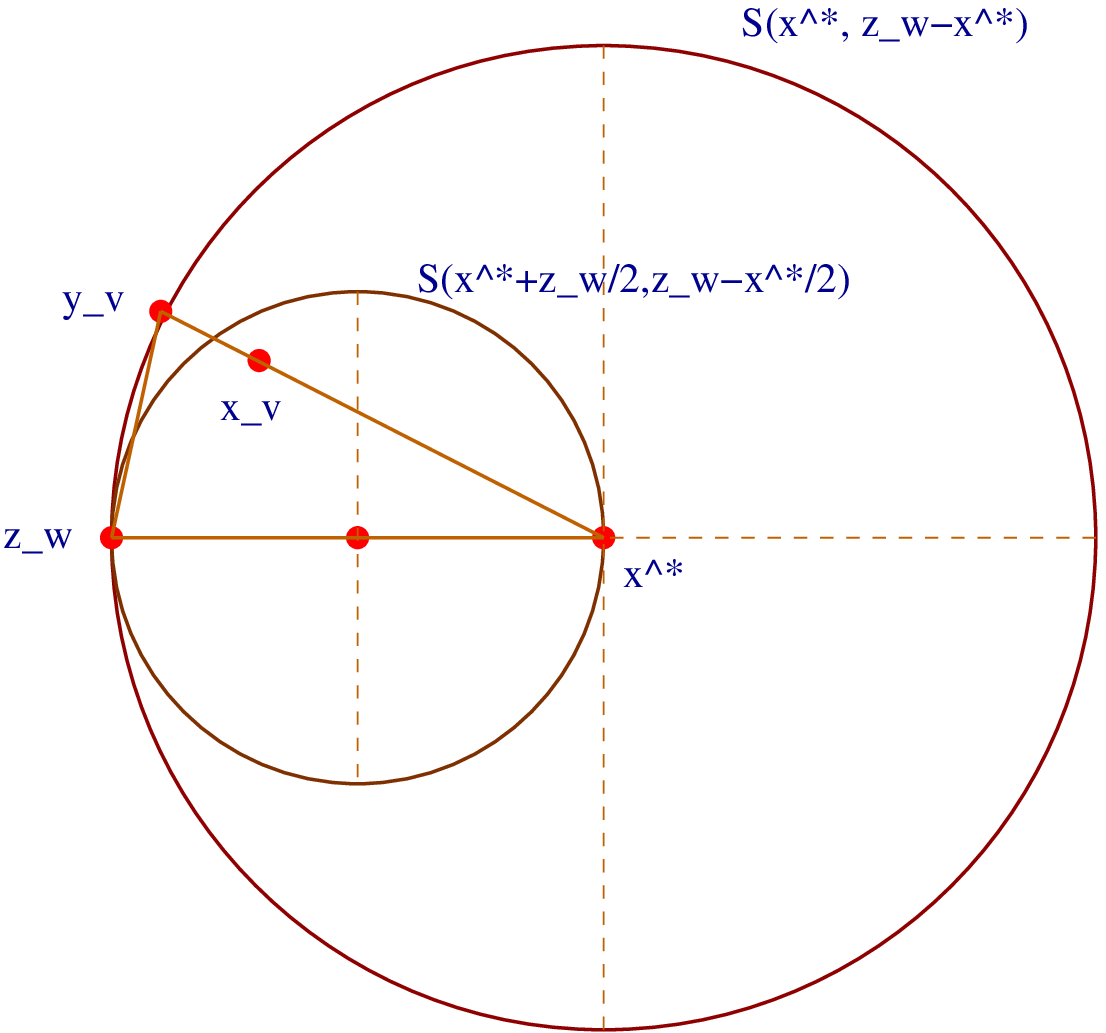}
    \caption{ }
  \label{circle}
  \end{center}
\end{figure}\\
\indent As $V$ is a Lyapunov candidate, $V$ satisfies \eqref{pos-def}; and hence $V$ is strictly increasing in every direction at $\mathbf{x}^*$. However as $V(\mathbf{x}_v) \geq V(\mathbf{y}_v)$, we can say that $V\vert_{\{\mathbf{x}^*+\gamma\mathbf{v}\mid \gamma \geq 0\}}$ is not a strictly increasing function, and this one variable function has a local maxima at certain point $\tilde{\mathbf{z}}_v \in [\mathbf{x}_v,\,\mathbf{y}_v)$. As $V\vert_{\{\mathbf{x}^*+\gamma\mathbf{v}\mid \gamma \geq 0\}}$ has a local maxima at $\tilde{\mathbf{z}}_v$, we have $k_v'(\parallel \tilde{\mathbf{z}}_v - \mathbf{x}^* \parallel)\,=0\,$.
\begin{equation}\label{cntr}
 \therefore \; h(\mathbf{v})\;=\;\parallel \mathbf{z}_v - \mathbf{x}^* \parallel \; \leq \; \parallel \tilde{\mathbf{z}}_v - \mathbf{x}^* \parallel \; < \;\parallel \mathbf{y}_v - \mathbf{x}^* \parallel \; = \; \parallel \mathbf{z}_w - \mathbf{x}^* \parallel\;=\;h(\mathbf{w})
\end{equation}
Therefore for every $\mathbf{v} \in \{\mathbf{u}_n \mid n \geq n_o\}$, we have $h(\mathbf{v})<h(\mathbf{w})$. This is a contradiction to the fact that $\mathbf{w}$ is a local minimizer of $h$. Therefore, $\nabla V(\mathbf{z}_w)=0$.
\end{proof} \vspace{0.1in} \\
{\large \textit{Remarks}}:
\begin{enumerate}
 \item From the above theorem, we get the following necessary condition for a Lyapunov candidate to be a Lyapunov function corresponding to the globally asymptotically stable equilibrium point: The function $h$ should not have any local minimizer with finite local minimum.\footnote{This necessary condition is without involving system dynamics. In \cite{self}, for a special autonomous system $\dot{\mathbf{x}}=-\mathbf{x}$, we have given a necessary and sufficient condition (without involving system dynamics) for a continuously differentiable function to be a Lyapunov function to conclude the global asymptotic stability of the origin. However, in general one can not find a necessary and sufficient condition on Lyapunov candidates to be a Lyapunov function, without involving dynamics of the system. This is because stability, asymptotic stability, or global asymptotic stability is an intrinsic property of an equilibrium point of the autonomous system.} 
 \item The more stronger and very obvious necessary condition would be: $\nabla V(\mathbf{x}) \neq 0$, $\forall\; \mathbf{x} \neq \mathbf{x}^*$; but there is no systematic method to check this condition numerically in a general case.
 \item If for every $\mathbf{d} \in S(\mathbf{0},1)$ the one variable function $V\vert_{\{\mathbf{x}^*+\gamma\mathbf{d}\mid \gamma \geq 0\}}$ is strictly increasing without an inflection point, i.e. $\inf\, \{h(\mathbf{d}) \mid \mathbf{d} \in S(\mathbf{0},1)\} = \infty $; then $h$ will not have any local minimizer with finite local minimum. 
 \item If $\inf\, \{h(\mathbf{d}) \mid \mathbf{d} \in S(\mathbf{0},1)\}=0$, then the global infimum of $h$ will not be attained, because range of $h$ is $(0,\infty]$. 
 \item For $\dot{x}=f(x)$, $x \in \mathbb{R}$; Lyapunov function is a one variable scalar valued function $V: \mathbb{R} \rightarrow \mathbb{R}$. In this case, there are only two directions and necessary condition reduces to the following: $V$ restricted to both directions must be a strictly increasing function without an inflection point.
 \item The above theorem has an obvious analogous counterpart for a Lyapunov candidate $V:D \rightarrow \mathbb{R}$, where $D \subseteq \mathbb{R}^n$, corresponding to an asymptotically stable equilibrium point $\mathbf{x}^*$ of \eqref{auto}, with appropriate changes in the definition of functions $k_d$ and $h$ according to the domain $D$ of $V$. This counterpart would give a necessary condition just like Remark-1 for Lyapunov functions corresponding to an asymptotically stable equilibrium point.
\end{enumerate}

\section{Generalized Steepest Descent Method}\label{huer}
\indent In this section, we give a method to find a local minimizer of the function $h$. This method is based on some of the ideas in the proof of Theorem \ref{chimu} and the steepest descent method (reader can refer to \cite{shetty}, \cite{bert}, \cite{ped} for steepest descent method).\\
\indent Suppose we want to check a continuously differentiable function $V:\mathbb{R}^n \rightarrow \mathbb{R}$ for being a Lyapunov function candidate corresponding to the globally asymptotically stable equilibrium point $\mathbf{x}^*$. In the generalized steepest descent method, we search for a point $\mathbf{z}\neq\mathbf{x}^*$ at which gradient of $V$ vanishes. If there exists such a point, then $V$ cannot be a Lyapunov function to conclude the global asymptotic stability of $\mathbf{x}^*$. Therefore it can be used as a first level test, by which checking positive definiteness of many functions can be avoided if you succeed in finding such a point. \\
\indent It is quite likely that while tackling the problem of finding a Lyapunov function to conclude the global asymptotic stability of the equilibrium point, one would start with continuously differentiable and coercive functions which can be written as sum of squares; and hence are known to be positive definite. In order to conclude that a function in the above class is a Lyapunov function, one needs to check that the time derivative of this function along system trajectories is negative definite, which is again numerically very difficult. If there exists a point $\mathbf{z}\neq\mathbf{x}^*$ at which the gradient of such function vanishes, then it cannot be a Lyapunov function to conclude the global asymptotic stability. Therefore in such cases also our method would be useful; because with this method the set of functions on which the negative definiteness condition, involving system dynamics, needs to be checked can be made much smaller. \\
\indent Consider an autonomous system given in \eqref{auto}. Suppose we are interested in checking the global asymptotic stability of an equilibrium point $\mathbf{x}^*$ of this autonomous system. Let $V:\mathbb{R}^n \rightarrow \mathbb{R}$ be a continuously differentiable function which we want to check for being a Lyapunov function candidate.
Now given $\mathbf{d} \in S(\mathbf{0},1)$, we can numerically find $h(\mathbf{d})$ by differentiating the one variable function $k_d(\gamma) := V(\mathbf{x}^* + \gamma \mathbf{d})$. By definition of $h$ function, $h(\mathbf{d})$ is nothing but smallest $\gamma >0$ for which derivative of $k_d$ becomes zero. Though we can numerically find $h(\mathbf{d})$ for given $\mathbf{d} \in S(\mathbf{0},1)$, we do not know the analytical expression of the function $h:S(\mathbf{0},1) \rightarrow \mathbb{R}_{++} \cup \{\infty\}$. If analytical expression of the function $h$ was known, then we could have searched for a local minimizer $\mathbf{w} \in S(\mathbf{0},1)$ of $h$ by either using the steepest descent method or some other optimization algorithm. If the function $h$ has a local minimizer $\mathbf{w}$ with $h(\mathbf{w}) < \infty$, then by the Theorem \ref{chimu} we know that: $\nabla V(\mathbf{z}_{w})=0$, where $\mathbf{z}_{w}=\mathbf{x}^*+h(\mathbf{w})\,\mathbf{w}$. As we do not know the analytical expression for the 
function $h$, we search 
for a local minimizer direction point $\mathbf{w} \in S(\mathbf{0},1)$ by what we call a 'generalized steepest descent method'.\\
\indent In order to explain the generalized steepest descent method, we will assume that we have a direction point $\mathbf{d} \in S(\mathbf{0},1)$ for which $h(\mathbf{d}) < \infty$. Later we will explain, how one could systematically search for such a direction point. If we have a direction $\mathbf{d} \in S(\mathbf{0},1)$ such that $h(\mathbf{d})\,<\,\infty$, then we can use the following generalized steepest descent method with non-exact line search to find a local minimizer $\mathbf{w}$ of $h$.
\begin{enumerate}
 \item Let $\mathbf{d}_0= \mathbf{d}$. For direction $\mathbf{d}_0$, calculate its corresponding point $\mathbf{z}_{d_0} = \mathbf{x}^* + h(\mathbf{d_0})\, \mathbf{d}_0$.
 Find $\nabla V(\mathbf{z}_{d_0})$. If $\nabla V(\mathbf{z}_{d_0})=\mathbf{0}$, $V$ cannot 
 be a Lyapunov function. If it is non-zero, then proceed as follows.
 \item Consider points of the form $(\mathbf{z}_{d_0} - \beta \nabla V(\mathbf{z}_{d_0}))$, $\beta > 0$ as shown in Figure \ref{nhrst}. 
 \begin{figure}[ht]
  \begin{center}
    \psfrag{x^*}{{\scriptsize \textcolor{blue}{$\mathbf{x}^*$}}}
    \psfrag{z_w}{{\scriptsize \textcolor{blue}{$\mathbf{z}_{d_0}$}}}
    \psfrag{w}{{\scriptsize \textcolor{blue}{$\mathbf{d}_0$}}}
    \psfrag{0}{{\scriptsize \textcolor{blue}{$\mathbf{0}$}}}
    \psfrag{grad V(z_w)}{{\scriptsize \textcolor{blue}{$\nabla V(\mathbf{z}_{d_0})$}}}
    \psfrag{v}{{\scriptsize \textcolor{blue}{$\mathbf{u}_{\beta}$}}}
    \psfrag{y_v}{{\scriptsize \textcolor{blue}{$\mathbf{z}_{d_0} - \beta \nabla V(\mathbf{z}_{d_0}) $}}}
    \psfrag{x_v}{{\scriptsize \textcolor{blue}{$\mathbf{z}_{u_{\beta}}$}}}
    \includegraphics[scale=0.45]{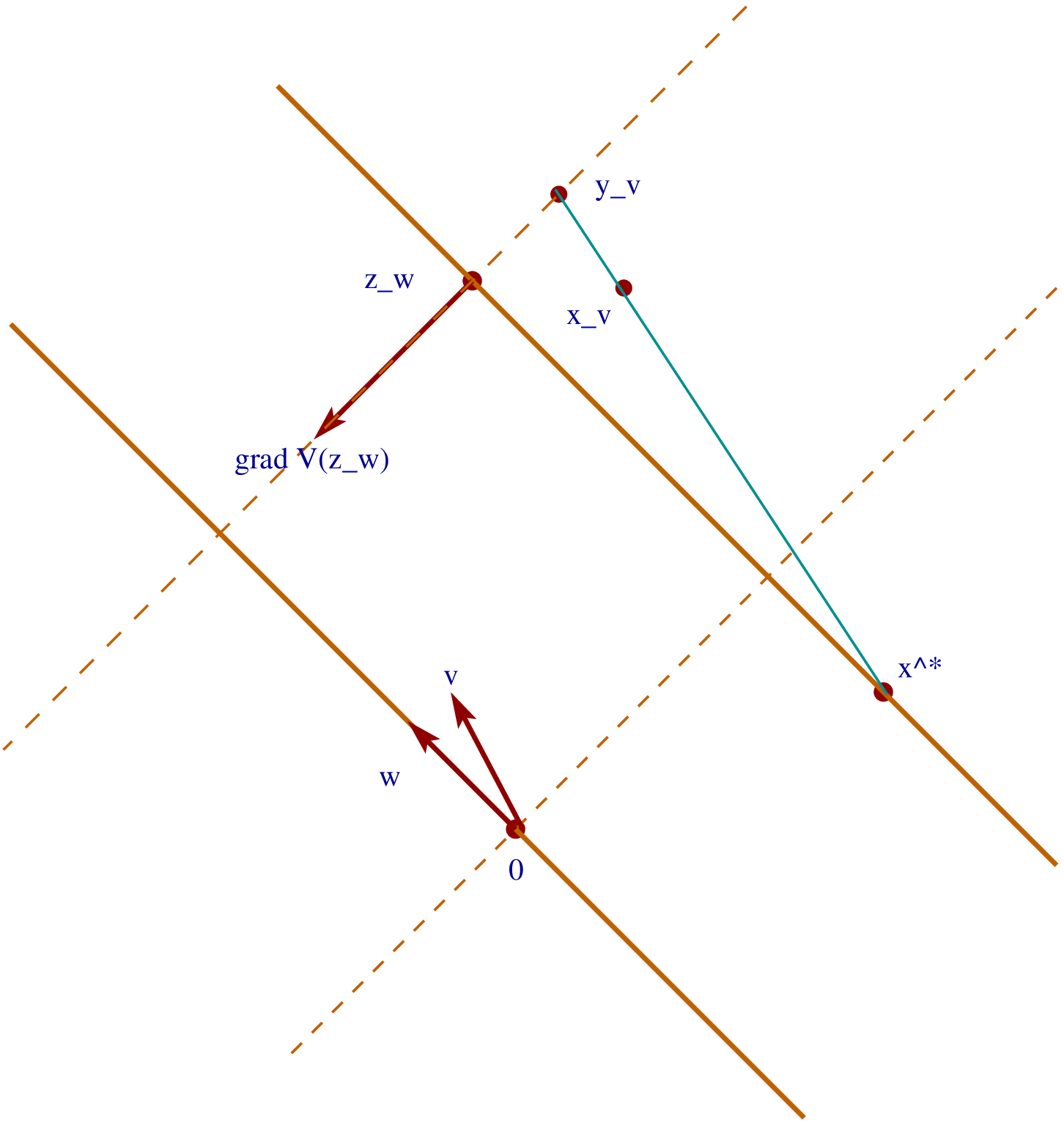}
    \caption{ }
    \label{nhrst}
  \end{center}
\end{figure}
 Corresponding to every point of this form, we get a unique direction $\mathbf{u}_{\beta} := (\mathbf{z}_{d_0} - \beta \nabla V(\mathbf{z}_{d_0})-\mathbf{x}^*)/\parallel \mathbf{z}_{d_0} - \beta \nabla V(\mathbf{z}_{d_0})-\mathbf{x}^* \parallel$. If $\mathbf{u}_{\beta}$ is sufficiently close to $\mathbf{d}_0$, i.e. if $\beta$ is sufficiently small, then $h(\mathbf{u}_{\beta}) < h(\mathbf{d}_0)$.\footnote{This has been shown in the proof of Theorem \ref{chimu}.}
 \item Evaluate $h$ at different directions $\mathbf{u}_{\beta}$ corresponding to points on the ray $\{\mathbf{z}_{d_0} - \beta \nabla V(\mathbf{z}_{d_0}) \mid \beta > 0\}$. A local minimizer of $h$ restricted to such points $\mathbf{u}_{\beta}$ is taken as the next iterate $\mathbf{d}_1$ in our generalized steepest descent method.\footnote{From the proof of Theorem \ref{chimu}, it is clear that, $h(\mathbf{d}_1) < h(\mathbf{d}_0)$.}
\item Repeat above process iteratively. In generalized steepest descent, we can guarantee that $h(\mathbf{d}_{k+1})<h(\mathbf{d}_k)$. At every iteration, find $\nabla V(\mathbf{z}_{d_k})$ and check whether it is approximately zero or not. If in j-th iteration, we get that $\nabla V(\mathbf{z}_{d_j})=0$, then we can conclude that $V$ is not a Lyapunov function candidate. 
\end{enumerate}
\hspace{0.2in} The $h$ restricted to directions of the form $\mathbf{u}_{\beta}$ in step-3, is a function of one variable $\beta$. Therefore, local minimizer of $h$ restricted to such directions $\mathbf{u}_{\beta}$ can be easily found from its graph. In this method, we minimize the function $h$ on direction points which are obtained from the steepest descent direction of $V$ at $\mathbf{z}_{d_k}$. Therefore, we call it the generalized steepest descent method. \\
\indent We now explain what happens when $\inf\, \{h(\mathbf{d}) \mid \mathbf{d} \in S(\mathbf{0},1)\} = 0$, and $h$ does not have any local minimizer. In this case the global infimum of $h$ will not be attained; because by definition, $h$ is a strictly positive function. In above case, after certain number of iterations of the generalized steepest descent method, the graph of the function $h$ restricted to directions of the form $\mathbf{u}_{\beta}$ would look as shown in Figure \ref{h1}. As $h$ is a strictly positive function, we get a discontinuity at a point on the ray $\{\mathbf{z}_{d_k} - \beta \nabla V(\mathbf{z}_{d_k}) \mid \beta > 0\}$ corresponding to the global infimum; and hence the global infimum of $h$ is not attained. Below is an example demonstrating this case. 
\begin{figure}[ht]
  \begin{center}
    \psfrag{h}{{\scriptsize \textcolor{blue}{$h \vert_ {\{(\mathbf{z}_{d_k} - \beta \nabla V(\mathbf{z}_{d_k})-\mathbf{x}^*)/\parallel \mathbf{z}_{d_k} - \beta \nabla V(\mathbf{z}_{d_k})-\mathbf{x}^* \parallel\; \mid \; \beta \geq 0\}}$}}}
    \psfrag{z}{{\scriptsize \textcolor{blue}{$\mathbf{z}_{d_k}$}}}
    \psfrag{d}{{\scriptsize \textcolor{blue}{$\{\mathbf{z}_{d_k} - \beta \nabla V(\mathbf{z}_{d_k}) \mid \beta \geq 0\}$}}}
    \includegraphics[scale=0.45]{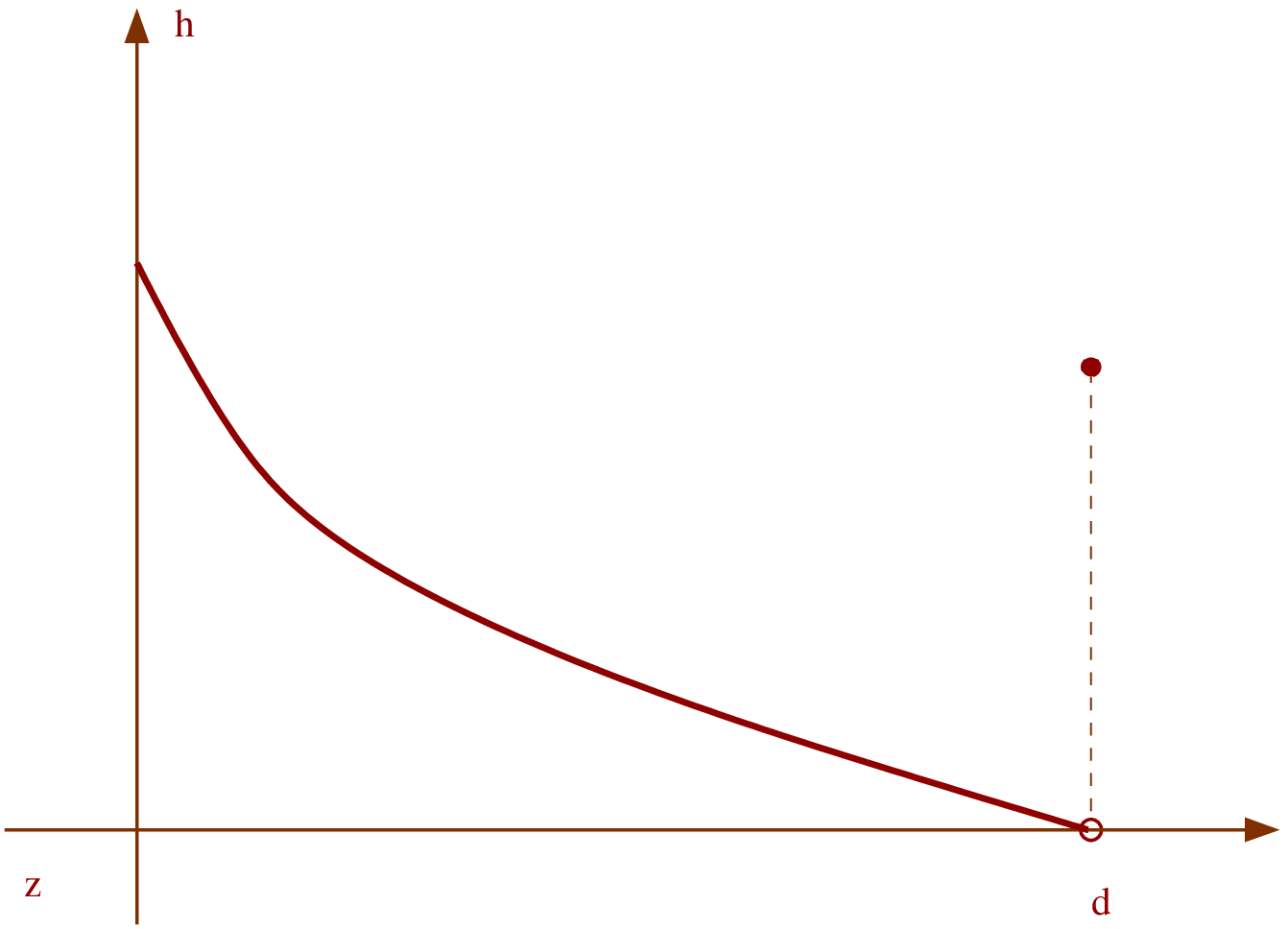}
    \caption{ }
    \label{h1}
  \end{center}
\end{figure}
\begin{example}\label{z_d zero}
 Consider a function $V:\mathbb{R}^2 \rightarrow \mathbb{R}$ defined as:
 \begin{equation}
 V(\mathbf{x}):=\frac{x_1^2}{a^2}+\frac{x_2^2}{b^2}\,. 
 \end{equation}
It can be easily checked that $V$ is positive definite and coercive. As $V$ is convex and positive definite, we can say that: for every $\mathbf{d} \in S(\mathbf{0},1)$ the one variable function $V\vert_{\{\gamma\mathbf{d}\,\mid\, \gamma \geq 0\}}$ is strictly increasing without an inflection point. In other words, for $V$ under consideration: $\inf\, \{h(\mathbf{d}) \mid \mathbf{d} \in S(\mathbf{0},1)\} = \infty $. Therefore $h$ will not have any local minimizer with finite local minimum; and hence $V$ is a Lyapunov candidate which satisfies the necessary condition obtained from Theorem \ref{chimu}.\\
 \indent Now consider a global diffeomorphism $T:\mathbb{R}^2 \rightarrow \mathbb{R}^2$ defined as follows:
 \begin{equation}
  T(\mathbf{x}) := \left[ \begin{array}{c}
                           x_1 \cos(x_1^2+x_2^2) + x_2 \sin(x_1^2+x_2^2) \\
                           x_2 \cos(x_1^2+x_2^2) - x_1 \sin(x_1^2+x_2^2)
                          \end{array} \right]\,.
\end{equation}
$T$ rotates every vector $\mathbf{x} \in \mathbb{R}^2$ in a clockwise direction by an angle $\theta(\mathbf{x})$, where $\theta: \mathbb{R}^2 \rightarrow \mathbb{R}$ is defined as $\theta(\mathbf{x}) :=x_1^2+x_2^2$. Consider a function $U:=V \circ T$,
 \begin{eqnarray}
  U(\mathbf{x}) &:=& V(T(\mathbf{x})) \nonumber \\
                &=& \frac{x_1^2 \cos^2(x_1^2+x_2^2) + x_2^2 \sin^2(x_1^2+x_2^2) + 2x_1x_2 \sin(x_1^2+x_2^2) \cos(x_1^2+x_2^2)}{a^2} + \nonumber \\ 
                & & \frac{x_2^2 \cos^2(x_1^2+x_2^2) + x_1^2 \sin^2(x_1^2+x_2^2) - 2x_1x_2 \sin(x_1^2+x_2^2) \cos(x_1^2+x_2^2)}{b^2}\,. \label{def-u}
 \end{eqnarray}
The $V$ under consideration is a continuously differentiable, positive definite, coercive function, and $T$ is a global diffeomorphism. Therefore the function $U:=V\circ T$ is also continuously differentiable, positive definite, and coercive. 
\begin{figure}[ht]
  \begin{center}
    \psfrag{t}{{\scriptsize \textcolor{blue}{$\theta$}}}
    \psfrag{s}{{\scriptsize \textcolor{blue}{$S(\mathbf{0},1)$}}}
    \psfrag{0}{{\scriptsize \textcolor{blue}{$\mathbf{0}$}}}
    \psfrag{d}{{\scriptsize \textcolor{blue}{$\mathbf{d}_{\theta}$}}}
    \includegraphics[scale=0.42]{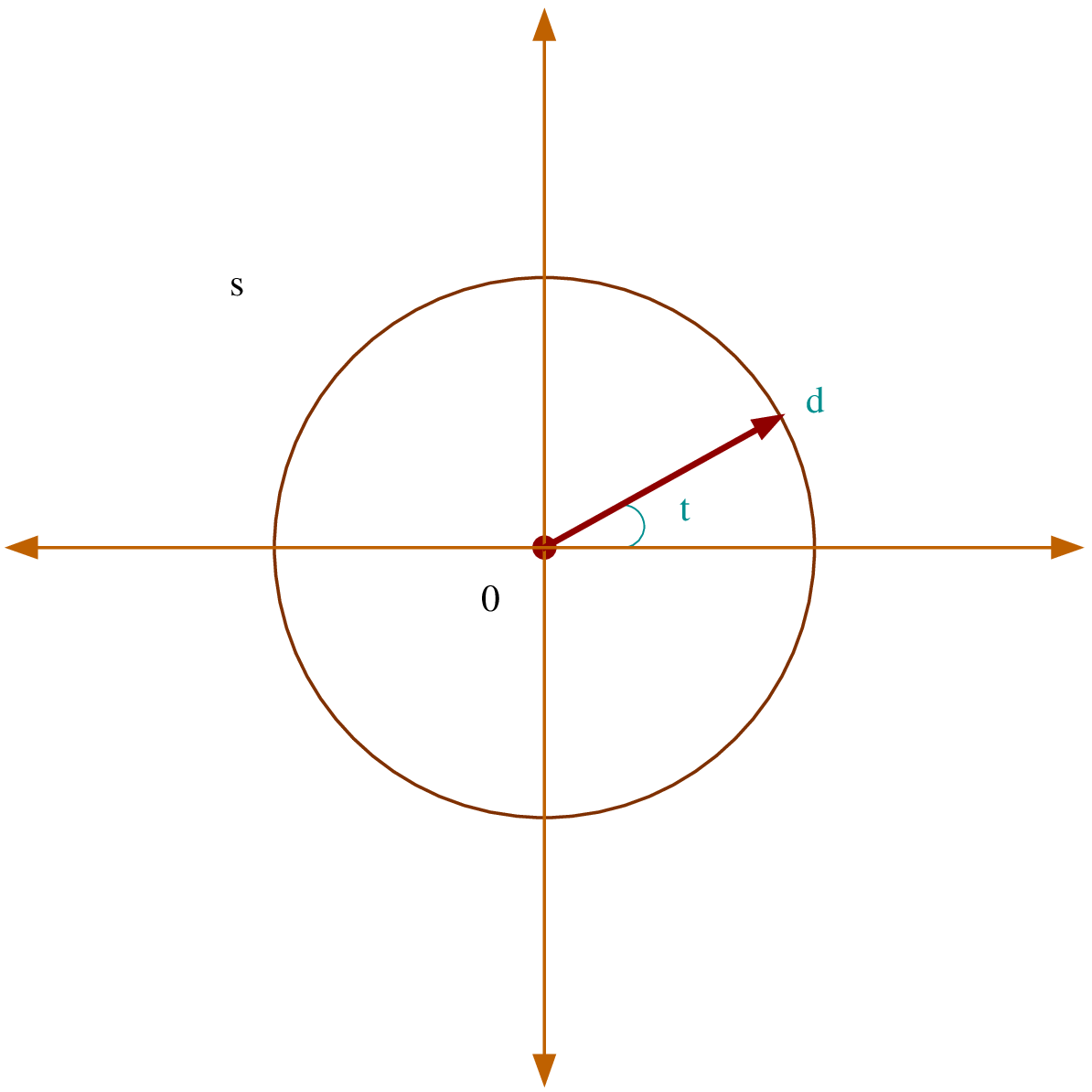}
    \caption{ }
  \label{direction}
  \end{center}
\end{figure}\\
\indent Let us parametrize directions in $\mathbb{R}^2$ by an angle $\theta$ it makes with the horizontal axis, as shown in Figure \ref{direction}. It has been checked using 'WolframAlpha' that, for the function $U:\mathbb{R}^2 \rightarrow \mathbb{R}$ given in \eqref{def-u} following holds:
\begin{eqnarray}
 \displaystyle \lim_{\theta \rightarrow 0} h(\mathbf{d}_{\theta}) &=& 0 < h(\mathbf{d}_0)\,, \\
 \displaystyle \lim_{\theta \rightarrow \pi/2} h(\mathbf{d}_{\theta}) &=& 0 < h(\mathbf{d}_{\pi/2})\,,  \\
 \displaystyle \lim_{\theta \rightarrow \pi} h(\mathbf{d}_{\theta}) &=& 0 < h(\mathbf{d}_{\pi})\,,  \\
 \displaystyle \lim_{\theta \rightarrow 3\pi/2} h(\mathbf{d}_{\theta}) &=& 0 < h(\mathbf{d}_{3\pi/2})\,.
\end{eqnarray}
\begin{figure}[ht]
\begin{center}
    \includegraphics[scale=0.42]{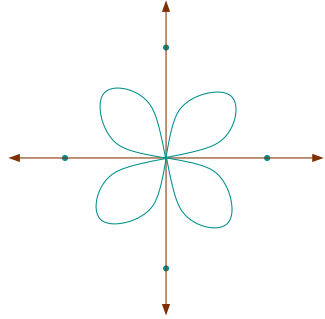}
    \caption{Approximate locus of $\mathbf{z}_{d_{\theta}}$ for $U$ given in \eqref{def-u}}
  \label{locus1}
  \end{center}
\end{figure}
Therefore for $U$ given in \eqref{def-u}, $\inf\, \{h(\mathbf{d}) \mid \mathbf{d} \in S(\mathbf{0},1)\}=0$. Corresponding to the function $U$, the approximate locus of $\mathbf{z}_{d_{\theta}}$, as $\theta$ varies in interval $[0,2\pi)$ is given in Figure \ref{locus1}. From this approximate locus it is clear that: for $U$ given in \eqref{def-u}, the function $h$ does not have any local minimizer with finite local minimum. Therefore, $U$ is a Lyapunov candidate which satisfies the necessary condition obtained from Theorem \ref{chimu}. \\
\end{example}
\indent We now explain how one could systematically search for a direction point $\mathbf{d} \in S(\mathbf{0},1)$ for which $h(\mathbf{d}) < \infty$. For this purpose, we would consider directions in $\mathbb{R}^n$ as points on the unit sphere centered at the origin with respect to $\infty$-norm rather than $2$-norm. Let us denote the unit sphere centered at the origin in $\mathbb{R}^n$ with respect to $\infty$-norm as $S_{\infty}(\mathbf{0},1)$. Imagine a grid on such unit sphere, where neighboring points are $\delta$-distance apart with respect to the $\infty$-norm; we will call such grid a $\delta$-grid (refer Figure \ref{sq} for a $\delta$-grid in $\mathbb{R}^3$). In $\mathbb{R}^n$ one could systematically move from one direction point on the $\delta$-grid to the other using $(n-1)$ loops; and this way all direction points on such $\delta$-grid can be exhausted.
\begin{figure}[ht]
  \begin{center}
   \includegraphics[scale=0.45]{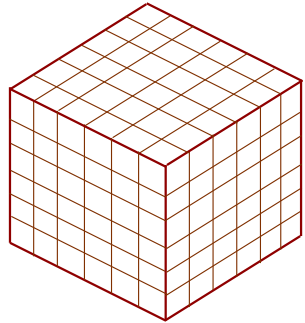}
    \caption{$\delta$-grid on the unit sphere centered at the origin w.r.t. $\infty$-norm in $\mathbb{R}^3$ }
    \label{sq}
  \end{center}
\end{figure}\\
\indent As $V$ is a continuously differentiable function, we can say the following. If for an arbitrary chosen direction $\mathbf{u}$, the one variable function $V\vert_{\{\mathbf{x}^*+\gamma\mathbf{u}\mid \gamma \geq 0\}}$ is strictly increasing without an inflection point, then there exists a neighborhood around $\mathbf{u} \in S_{\infty}(\mathbf{0},1)$ (with respect to induced topology on $S_{\infty}(\mathbf{0},1)$) such that: $V$ restricted to every direction in that neighborhood is a strictly increasing function without an inflection point. Therefore if for a sufficiently small $\delta$, $h(\mathbf{d})=\infty$ for every direction point $\mathbf{d}$ on the $\delta$-grid of $S_{\infty}(\mathbf{0},1)$; then one could say that, the following is highly probable: $h(\mathbf{d})=\infty$, $\forall\; \mathbf{d} \in S_{\infty}(\mathbf{0},1)$. In other words, in such case it is quite likely that: $V\vert_{\{\mathbf{x}^*+\gamma\mathbf{d}\mid \gamma \geq 0\}}$ is a strictly increasing without an inflection point, for every $\mathbf{d} \in S_{\infty}(\mathbf{0},1)$. If $h(\mathbf{d})=\infty$, $\forall\, \mathbf{d} \in S_{\infty}(\mathbf{0},1)$; then $V$ satisfies the necessary condition which is deduced from the Theorem \ref{chimu}. \\
\indent We explain below a simple strategy for the systematic search of a direction point $\mathbf{d} \in S_{\infty}(\mathbf{0},1)$ for which $h(\mathbf{d}) < \infty$.
 \begin{enumerate}
    \item Decide on some small value for $\delta$, and start with an arbitrary direction point $\mathbf{u}$ on $\delta$-grid of $S_{\infty}(\mathbf{0},1)$. We know how to find $h(\mathbf{u})$ for given $\mathbf{u} \in S_{\infty}(\mathbf{0},1)$. Suppose for this arbitrarily chosen $\mathbf{u} \in S_{\infty}(\mathbf{0},1)$, the function $V\vert_{\{\mathbf{x}^*+\gamma\mathbf{u}\mid \gamma \geq 0\}}$ is a strictly increasing without an inflection point, i.e. $h(\mathbf{u})=\infty$. Then, keep moving in a systematic way from one direction point on $\delta$-grid to the other till you get a direction point $\mathbf{d} \in S_{\infty}(\mathbf{0},1)$ for which $h(\mathbf{d}) < \infty$. 
    \item If during this search process you get a direction point $\mathbf{d} \in S_{\infty}(\mathbf{0},1)$ for which the one variable function $V\vert_{\{\mathbf{x}^*+\gamma\mathbf{d}\mid \gamma \geq 0\}}$ is not strictly increasing at $\mathbf{x}^*$; then $V$ does not satisfy the condition given in \eqref{pos-def}. Therefore, $V$ cannot be a Lyapunov candidate.
    \item If $h(\mathbf{d})=\infty$ for every direction point $\mathbf{d}$ on $\delta$-grid of $S_{\infty}(\mathbf{0},1)$, then it is highly probable that: $h(\mathbf{d})=\infty$, $\forall\; \mathbf{d} \in S_{\infty}(\mathbf{0},1)$. If higher accuracy is needed, then one could re-evaluate the function $h$ on a finer $\delta$-grid of $S_{\infty}(\mathbf{0},1)$. 
    \end{enumerate}
We give below some examples to show how the stated necessary condition, without involving dynamics of the system, is useful in ruling out Lyapunov candidates.
\begin{example}
Consider a scalar autonomous system $\dot{x}=-x^3$, 
and a continuously differentiable function $V$ given by $V(x)=x^6/6-13x^4/4+18x^2$. This function $V$ can be written as sum of squares as follows: \[V(x)=(\frac{x^3}{\sqrt{6}}-\frac{13\times\sqrt{6}x}{8})^2 + (18-\frac{13\times \sqrt{6}}{8})x^2\,.\]
Therefore, $V$ is positive definite and coercive; and hence it is a valid Lyapunov candidate for the autonomous system $\dot{x}=-x^3$. \\
\indent Now let us check whether $V$ satisfies the necessary condition given in section-\ref{main-result}. For vector space $\mathbb{R}$, there are only two directions: $\mathbf{d}_1=1$ and $\mathbf{d}_2=-1$. It can be checked that for $V$ under consideration, $h(\mathbf{d}_1)=h(\mathbf{d}_2)=2$. Therefore both $\mathbf{d}_1$ and $\mathbf{d}_2$ are global minimizers of $h:S(\mathbf{0},1) \rightarrow \mathbb{R}_{++} \cup \{\infty\}$.
\begin{eqnarray}
 \mathbf{z}_{d_1}=0+2\mathbf{d}_1=2 &\mbox{ and }& \mathbf{z}_{d_2}=0+2\mathbf{d}_2=-2
\end{eqnarray}
It can be checked that $V'(2)=V'(-2)=0$. Therefore, $V$ under consideration cannot be a Lyapunov function to conclude the global asymptotic stability of the origin.
\end{example} 
\indent In the above example, the vector field $f$ was a polynomial field, and hence checking the condition, $\dot{V}(\mathbf{x})=(\nabla V(\mathbf{x}))^Tf(\mathbf{x}) < 0, \; \forall \, \mathbf{x} \neq \mathbf{x}^*$ may not be too difficult. However, in general when $f$ is some complicated function, like in the example given below, checking the condition involving dynamics would not be easy. In such situation, the necessary condition which we have given in section-\ref{main-result} would be useful to rule out some Lyapunov candidates. 
\begin{example}
 Consider a second order autonomous system whose vector field is given by:
\begin{subequations}
\begin{equation}
 \dot{x}_1= \left\{ \begin{array}{cc} -\frac{x_1^2}{(1+x_1^2)}, & x_1\geqslant0 \\ \frac{x_1^2}{(1+x_1^2)}, & x_1\leqslant0 \end{array} \right.
\end{equation}
\begin{equation}
 \dot{x}_2=\left\{ \begin{array}{cc} e^{(-x_2^2)}-1, & x_2\geqslant 0 \\ 1- e^{(x_2^2)}, & x_2\leqslant 0\,. \end{array} \right.
\end{equation}
 \end{subequations}
Consider a continuously differentiable function $V$ given by $V(x_1,x_2)=x_1^6/6-13x_1^4/4+18x_1^2+x_2^2$, which can be written as sum of squares as follows: 
\begin{equation}\label{2nd-lc}
 V(x_1,x_2)=(\frac{x_1^3}{\sqrt{6}}-\frac{13\times\sqrt{6}x_1}{8})^2 + (18-\frac{13\times \sqrt{6}}{8})x_1^2+x_2^2\,.
\end{equation}
Therefore $V$ is positive definite and coercive; and hence it is a valid Lyapunov candidate for the autonomous system under consideration. 
\begin{figure}[ht]
  \begin{center}
    \includegraphics[scale=0.21]{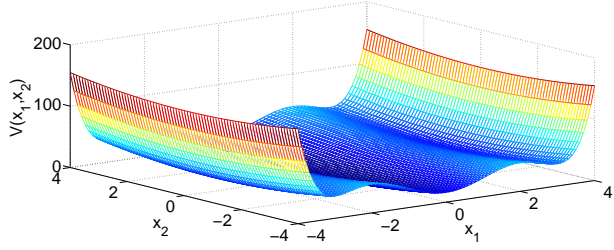}
    \caption{\small{Graph of $V(\mathbf{x})=x_1^6/6-13x_1^4/4+18x_1^2+x_2^2$ }}
  \end{center}
\end{figure} \\
\indent Now let us check whether $V$ satisfies the necessary condition given in section-\ref{main-result}. The approximate locus of $\mathbf{z}_d$ as $\mathbf{d}$ varies over sector $S_1$ and $S_3$ is shown in Figure \ref{lower-semicont}. From this approximate locus it is clear that, directions $\mathbf{e}_1$ and $-\mathbf{e}_1$ are local minimizers of  the function $h:S(\mathbf{0},1) \rightarrow \mathbb{R}_{++} \cup \{\infty\}$.
\begin{figure}[ht]
  \begin{center}
    \psfrag{s1}{{\scriptsize \textcolor{blue}{$S_1$}}}
    \psfrag{s2}{{\scriptsize \textcolor{blue}{$S_2$}}}
    \psfrag{s3}{{\scriptsize \textcolor{blue}{$S_3$}}}
    \psfrag{s4}{{\scriptsize \textcolor{blue}{$S_4$}}}
    \includegraphics[scale=0.50]{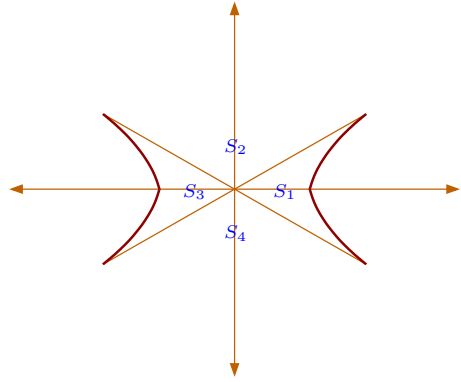}
    \caption{Approximate locus of $\mathbf{z}_d$ for $V$, as direction $\mathbf{d}$ varies over sectors $S_1$ and $S_3$.}
    \label{lower-semicont}
  \end{center}
\end{figure}\\
\indent It can be checked that, $h(\mathbf{e}_1)=h(-\mathbf{e}_1)=2$. Therefore $\mathbf{z}_{e_1}=\mathbf{0}+2\mathbf{e}_1=\left[2 \;\, 0\right]^T$, and $\mathbf{z}_{-e_1}=\mathbf{0}+2(-\mathbf{e}_1)=\left[-2\;\, 0\right]^T$.
\begin{equation}
 \nabla V(2,0) = \nabla V(-2,0) = \left[ \begin{array}{c}
                                          0 \\ 0
                                         \end{array} \right]\,.
\end{equation}
Therefore $V$  cannot be a Lyapunov function to conclude the global asymptotic stability of the origin. \vspace{0.05in} \\
\end{example}
\indent We now explain, how the generalized steepest descent method can also be used as a heuristic to check whether a continuously differentiable function $V:E \rightarrow \mathbb{R}$, where $E \subseteq \mathbb{R}^n$ with non-empty interior, is locally positive definite or not in some neighborhood $D\subseteq E$ of $\mathbf{x}^* \in E^o$. If such neighborhood $D$ exists, where $V$ satisfies \eqref{lya can}; then $V\vert_{D}$ is a Lyapunov candidate corresponding to a stable or an asymptotically stable equilibrium point $\mathbf{x}^*$. \\
\indent In this case, we have to make appropriate changes in the definition of functions $k_d$ and $h$ according to the domain $E$ of a function $V$. For the direction point $\mathbf{d} \in S(\mathbf{0},1)$, define $\alpha_d$ as follows.
\begin{equation}
 \alpha_d := \sup \{ \gamma \in \mathbb{R}_{++} \mid \mathbf{x}^*+\gamma \mathbf{d} \in E \}
\end{equation}
Denote the one variable function $V \vert_{\{\mathbf{x}^* + \gamma \mathbf{u} \,\mid\, \gamma \in [0,\alpha_d) \}}$ by $k_d$. Therefore, $k_d:[0,\alpha_d) \rightarrow \mathbb{R}$ is a map given by $k_d(\gamma) = V(\mathbf{x}^*+\gamma \mathbf{d})$. Now, define the function $h$ as follows:
\begin{equation}
 h(\mathbf{d}) := \mbox{ minimum } \gamma \in (0,\alpha_d) \mbox{ which satisfies } k_d'(\gamma) = 0\,.
\end{equation}
If such $\gamma$ does not exists, then we declare $h(\mathbf{d})=\infty$.\\
\indent We explain below, how the generalized steepest descent method can be used as a heuristic to check local positive definiteness.
\begin{itemize}
\item If $V:E \rightarrow \mathbb{R}$ is not locally positive definite for any neighborhood $D\subseteq E$ of $\mathbf{x}^* \in E^o$, then the following set of directions is non-empty.
\[
 \{ \mathbf{u} \in S_{\infty}(\mathbf{0},1) \mid V \vert_{\{\mathbf{x}^* + \gamma \mathbf{u} \,\mid\, \gamma \in [0,\alpha_u)\}} \mbox{ is not strictly increasing at point } \mathbf{x}^*\}.
\]
\item Decide on some small value of $\delta$, and evaluate the function $h$ at every point on the $\delta$-grid of $S_{\infty}(\mathbf{0},1)$. In this process, if you get a direction $\mathbf{u} \in S_{\infty}(\mathbf{0},1)$ for which $V \vert_{\{\mathbf{x}^* + \gamma \mathbf{u} \,\mid\, \gamma \in [0,\alpha_u)\}}$ is not strictly increasing at point $\mathbf{x}^*$, then $V$ is not locally positive definite for any neighborhood of $\mathbf{x}^*$. Therefore in such case, $V$ cannot be a Lyapunov candidate to conclude the stability or asymptotic stability of an equilibrium point $\mathbf{x}^*$. 
\item Suppose you don't get such direction after evaluating the function $h$ at every point on the $\delta$-grid of $S_{\infty}(\mathbf{0},1)$. Then, take the direction point $\mathbf{d}$ on the $\delta$-grid of $S_{\infty}(\mathbf{0},1)$, for which the function $h$ is minimum, as your initial iterate of the generalized steepest descent method.
\item The function $V$ under consideration is continuously differentiable, and generalized steepest descent method ensures that $h(\mathbf{d}_{k+1})\,<\,h(\mathbf{d}_k)$. Therefore if  $V$ is not locally positive definite for any neighborhood of $\mathbf{x}^*$, then the generalized steepest descent method would eventually give a direction $\mathbf{u}$ in which $V$ is not strictly increasing at point $\mathbf{x}^*$.
\end{itemize}
This heuristic is likely to give conclusive results, when $V:E \rightarrow \mathbb{R}$ is not locally positive definite for any neighborhood $D\subseteq E$ of $\mathbf{x}^* \in E^o$. Therefore, it can be used to discard functions from being locally positive definite in any neighborhood of an equilibrium point $\mathbf{x}^*$.
\section{Conclusion and Future Work}\label{con-futrwrk}
\indent We have given a necessary condition on a Lyapunov candidate to be a Lyapunov function corresponding to the globally asymptotically stable equilibrium point, which is numerically easier to check. The given necessary condition and a method to check it would be numerically useful in searching for a Lyapunov function; as it would rule out quite a few Lyapunov candidates. Theorem \ref{chimu} also has a counterpart for Lyapunov candidates corresponding to an asymptotically stable equilibrium point. This counterpart gives a necessary condition for Lyapunov functions corresponding to an asymptotically stable equilibrium point. \\
\indent Often in the process of finding a Lyapunov function to conclude the global asymptotic stability of the equilibrium point, one starts with continuously differentiable and coercive functions which can be written as sum of squares. This is because checking positive definiteness of a function is numerically very difficult. Still in order to conclude that a function in the above class is a Lyapunov function; one needs to check that, the time derivative of this function along system trajectories is negative definite. Needless to say that, checking this negative definiteness condition involving system dynamics is numerically very difficult. With a necessary condition which we 
have proposed along with the generalized steepest descent method, the set of functions on which the negative definiteness condition (involving system dynamics) needs to be checked can be made smaller. \\
\indent The generalized steepest descent method would also be useful in checking local positive definiteness of a function, which is a known necessary condition for a continuously differentiable function to be a Lyapunov function corresponding to a stable or an asymptotically stable equilibrium point. Following is a possible future work: 
 \begin{itemize}
  \item To develop a numerically more efficient method either using conjugate gradient method or some other optimization algorithm to check the necessary condition given in section-\ref{main-result}.
  \item To generalize results in this paper for time varying autonomous systems or 
  unforced systems.
 \end{itemize}
\begin{appendix}
\section{Auxiliary Results}\\
In this section, we state and prove some results related to the $h$ function.
 \begin{lemma}\label{cont-h}
  The function $h:S(\mathbf{0},1) \rightarrow \mathbb{R}_{++} \cup \{\infty\}$, defined on a Lyapunov candidate (continuously differentiable function satisfying \eqref{pos-def} and \eqref{crcv}) corresponding to the globally asymptotically stable equilibrium point $\mathbf{x}^*$, can lose its continuity only at the following types of direction points:
\begin{enumerate}
 \item At a direction point $\mathbf{d} \in S(\mathbf{0},1)$, for which there exists a sequence of direction points $(\mathbf{d}_n) \in S(\mathbf{0},1)$ converging to $\mathbf{d}$ such that, the corresponding sequence of points $\mathbf{z}_{d_n}$ converges to $\mathbf{x}^*$. 
 \item At a direction point $\mathbf{d} \in S(\mathbf{0},1)$, for which the corresponding point $\mathbf{z}_d$ is an inflection point of one variable function $V\vert_{\{\mathbf{x}^*+\gamma\mathbf{d}\mid \gamma \geq 0\}}$. 
 \item At a direction point $\mathbf{d} \in S(\mathbf{0},1)$, for which there exists a sequence of direction points $(\mathbf{d}_n) \in S(\mathbf{0},1)$ converging to $\mathbf{d}$ such that: for every $n \in \mathbb{N}$, the corresponding point $\mathbf{z}_{d_n}$ is an inflection point of one variable function $V\vert_{\{\mathbf{x}^*+\gamma\mathbf{d}_n\mid \gamma \geq 0\}}$. \vspace{0.08in}
\end{enumerate} 
 \end{lemma}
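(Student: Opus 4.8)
The plan is to prove the contrapositive: if $\mathbf{d}\in S(\mathbf{0},1)$ is \emph{not} of any of the three listed types, then $h$ is continuous at $\mathbf{d}$. The whole argument rests on the single auxiliary map
\begin{equation}
F(\gamma,\mathbf{d}) := \langle \nabla V(\mathbf{x}^*+\gamma\mathbf{d}),\,\mathbf{d}\rangle = k_d'(\gamma),
\end{equation}
which is jointly continuous in $(\gamma,\mathbf{d})$ because $V$ is continuously differentiable. Two elementary facts will be used repeatedly: first, since $V$ satisfies \eqref{pos-def}, the point $\mathbf{x}^*$ is a global minimizer, so $\nabla V(\mathbf{x}^*)=\mathbf{0}$ and hence $F(0,\mathbf{d})=0$ for every $\mathbf{d}$; second, again by \eqref{pos-def}, $k_d(0)=0$ while $k_d>0$ afterwards and $k_d'\neq 0$ on $(0,h(\mathbf{d}))$, so by continuity $F(\gamma,\mathbf{d})>0$ for all $\gamma\in(0,h(\mathbf{d}))$. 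In this language $h(\mathbf{d})$ is exactly the \emph{smallest positive zero} of $F(\cdot,\mathbf{d})$, recalling \eqref{zd}, and continuity of $h$ at $\mathbf{d}$ is equivalent to the statement that every convergent subsequence of $\big(h(\mathbf{d}_n)\big)$ in $[0,\infty]$, for $\mathbf{d}_n\to\mathbf{d}$, has limit $h(\mathbf{d})$. I would split this into a lower and an upper bound.

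For the lower bound I would take any $\mathbf{d}_n\to\mathbf{d}$ with $h(\mathbf{d}_n)\to L$ and show $L\ge h(\mathbf{d})$. Whenever $h(\mathbf{d}_n)<\infty$ we have $F\big(h(\mathbf{d}_n),\mathbf{d}_n\big)=0$, so joint continuity of $F$ gives $F(L,\mathbf{d})=0$ as soon as $0<L<\infty$; thus $L$ would be a positive zero of $F(\cdot,\mathbf{d})$, and if $L<h(\mathbf{d})$ this contradicts $h(\mathbf{d})$ being the \emph{smallest} positive zero. The only way the limit can fall strictly below $h(\mathbf{d})$ is therefore $L=0$, i.e. $\mathbf{z}_{d_n}=\mathbf{x}^*+h(\mathbf{d}_n)\mathbf{d}_n\to\mathbf{x}^*$, which is precisely a type-1 direction. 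Excluding type 1 hence yields $\liminf_{n}h(\mathbf{d}_n)\ge h(\mathbf{d})$, i.e. lower semicontinuity; this half is routine once joint continuity of $F$ is in hand.

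For the upper bound I would exploit transversality of the zero of $F(\cdot,\mathbf{d})$ at $h(\mathbf{d})$. If $h(\mathbf{d})<\infty$ and $\mathbf{z}_d$ is a genuine local maximizer of $k_d$ rather than an inflection point, then $k_d$ strictly decreases immediately past $h(\mathbf{d})$, so by the mean value theorem there is a $\gamma_1$ slightly larger than $h(\mathbf{d})$ with $F(\gamma_1,\mathbf{d})<0$. Joint continuity makes $F(\gamma_1,\mathbf{d}_n)<0$ for all large $n$, while $F$ is positive just to the right of $0$; the intermediate value theorem then forces a zero of $F(\cdot,\mathbf{d}_n)$ in $(0,\gamma_1)$, so $h(\mathbf{d}_n)\le\gamma_1$. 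Letting $\gamma_1\downarrow h(\mathbf{d})$ gives $\limsup_n h(\mathbf{d}_n)\le h(\mathbf{d})$. The case $h(\mathbf{d})=\infty$ is subsumed, since a finite subsequential limit would again produce a positive zero of $F(\cdot,\mathbf{d})$, which does not exist. Combining the two bounds yields continuity at every admissible $\mathbf{d}$.

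I expect the upper bound to be the crux, and it is exactly here that types 2 and 3 enter. The mean-value step breaks down precisely when no $\gamma_1>h(\mathbf{d})$ with $F(\gamma_1,\mathbf{d})<0$ exists, i.e. when $k_d'$ touches zero at $h(\mathbf{d})$ without changing sign; this is the statement that $\mathbf{z}_d$ is an inflection point of $V\vert_{\{\mathbf{x}^*+\gamma\mathbf{d}\mid\gamma\ge 0\}}$ (type 2), and in that case a small rotation of the direction can lift $k_d'$ off zero and push the first critical point outward, producing a jump in $h$. The residual degeneracy is that, even when $\mathbf{z}_d$ itself is not an inflection point, the first critical points $\mathbf{z}_{d_n}$ along an approaching sequence may each be inflection points of $k_{d_n}$ that evaporate in the limit (type 3); the not-type-3 hypothesis is what rules this out, and certifying that these tangential, non-transversal configurations are the \emph{only} obstructions to the upper bound is the delicate part of the proof, drawing on the finer structure of inflection points developed among the auxiliary results.
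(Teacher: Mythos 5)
Your high-level strategy is the same contrapositive the paper uses, but your execution is genuinely different and, for half of the argument, sharper: you replace the paper's informal perturbation of the one-variable graphs in Figure \ref{graph} by the jointly continuous map $F(\gamma,\mathbf{d})=\langle\nabla V(\mathbf{x}^*+\gamma\mathbf{d}),\mathbf{d}\rangle$ and split continuity into two semicontinuity bounds. Your lower bound is correct and complete: $k_d'>0$ on $(0,h(\mathbf{d}))$, so a finite subsequential limit $L$ of $h(\mathbf{d}_n)$ with $0<L<h(\mathbf{d})$ would, by joint continuity, be a positive zero of $F(\cdot,\mathbf{d})$ smaller than the smallest one, and $L=0$ is exactly type 1; nothing in the paper's proof is this precise. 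Note, however, that your upper bound never invokes the type-3 hypothesis at all---it only needs points $\gamma_1>h(\mathbf{d})$, arbitrarily close to $h(\mathbf{d})$, with $F(\gamma_1,\mathbf{d})<0$---so if it were complete it would prove the lemma with type 3 deleted; that mismatch between your case accounting and the statement is a warning sign.

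The genuine gap is the dichotomy on which the upper bound rests: ``$\mathbf{z}_d$ is a genuine local maximizer rather than an inflection point, hence $k_d$ strictly decreases immediately past $h(\mathbf{d})$.'' This implication is false: the first critical point can be a \emph{non-strict} local maximum, with $k_d'\equiv 0$ on an interval $[h(\mathbf{d}),h(\mathbf{d})+\epsilon]$ and negative only afterwards. Such a $\mathbf{z}_d$ is a local maximizer and is not an inflection point, yet no admissible $\gamma_1$ near $h(\mathbf{d})$ exists, so your MVT/IVT step only bounds $\limsup_n h(\mathbf{d}_n)$ by the far end of the flat stretch. This is precisely the configuration the paper draws in Figure \ref{graph}(a) and waves away with the unsupported claim that the flat interval ``gradually becomes zero'' under perturbation---and the case is fatal rather than technical. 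Take $V(x_1,x_2)=g(\|\mathbf{x}\|)+x_2^2$ with $g$ smooth, $g(r)=r^2$ near $0$, $g'>0$ on $(0,1)$, $g'\equiv 0$ on $[1,2]$, $g'<0$ on $(2,2.5)$ with a dip shallow enough that $g$ stays positive, and $g'>0$, $g\to\infty$ beyond; this $V$ satisfies \eqref{pos-def} and \eqref{crcv}. Then $h(\mathbf{e}_1)=1$, but for nearby directions $\mathbf{d}_\theta$ one has $k_\theta'(r)=g'(r)+2r\sin^2\theta>0$ on all of $(0,2]$, so $h(\mathbf{d}_\theta)>2$ and $h$ jumps at $\mathbf{e}_1$; moreover nearby $h\ge 1$ (no type 1), $\mathbf{z}_{e_1}$ is a flat-top maximum and not an inflection point (no type 2), and for suitable $g$ the nearby first critical points are strict local maxima (no type 3). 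So either ``inflection point'' in the lemma must be read broadly, as ``first critical point past which $k_d$ does not immediately strictly decrease''---in which case your dichotomy holds by definition, your proof closes, and it in fact proves a stronger statement needing only types 1 and 2---or the lemma as stated is false. Your write-up, like the paper's proof, argues with the narrow reading while needing the broad one; to repair it you must either adopt and state the broad definition or add the flat-top maximum as a fourth exceptional type.
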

\begin{proof} We first show that, if a direction point $\mathbf{d} \in S(\mathbf{0},1)$ does not belong to any of the three mentioned cases; then the function $h$ is continuous at that direction point $\mathbf{d}$. \\
\indent As direction point $\mathbf{d}$ does not fall in any of the three mentioned cases, there exists a neighborhood $N_{\varepsilon}(\mathbf{d})$ (w.r.t. induced topology on $S(\mathbf{0},1)$) of $\mathbf{d}$ for some $\varepsilon > 0$ such that: for every $\mathbf{u} \in N_{\varepsilon}(\mathbf{d})$, the corresponding point $\mathbf{z}_u$ is not an inflection point. Moreover, the point $\mathbf{z}_d$ corresponding to direction $\mathbf{d}$ is either a non-strict local maximum (as shown in Figure \ref{graph}(a)) or a strict local maximum  (as shown in Figure \ref{graph}(b)) of the function $k_d$. As $V$ is continuously differentiable, when the direction point $\mathbf{d}$ is perturbed on $S(\mathbf{0},1)$, the $\varepsilon$-length interval shown in Figure \ref{graph} would gradually become zero for $V$ restricted to these perturbed direction points. This fact along with the fact that: for every $\mathbf{u} \in N_{\varepsilon}(\mathbf{d})$, the corresponding point $\mathbf{z}_u$ is not an inflection point, ensures that the 
function $h$ is continuous at the direction point $\mathbf{d}$.
\begin{figure}[ht]
  \begin{center}
    \psfrag{x}{{\scriptsize \textcolor{blue}{$\mathbf{x}^*$}}}
    \psfrag{a}{(a)}
    \psfrag{b}{(b)}
    \psfrag{v}{{\scriptsize \textcolor{blue}{$k_d(\gamma):=V\vert_{\{\mathbf{x}^*+\gamma\mathbf{d}\mid \gamma \geq 0\}} $}}}
    \psfrag{d}{{\scriptsize \textcolor{blue}{$\{\mathbf{x}^*+\gamma\mathbf{d}\mid \gamma \geq 0\} $}}}
    \psfrag{e}{{\scriptsize \textcolor{blue}{$\epsilon$}}}
    \psfrag{z}{{\scriptsize \textcolor{blue}{$\mathbf{z}_d$}}}
    \includegraphics[scale=0.43]{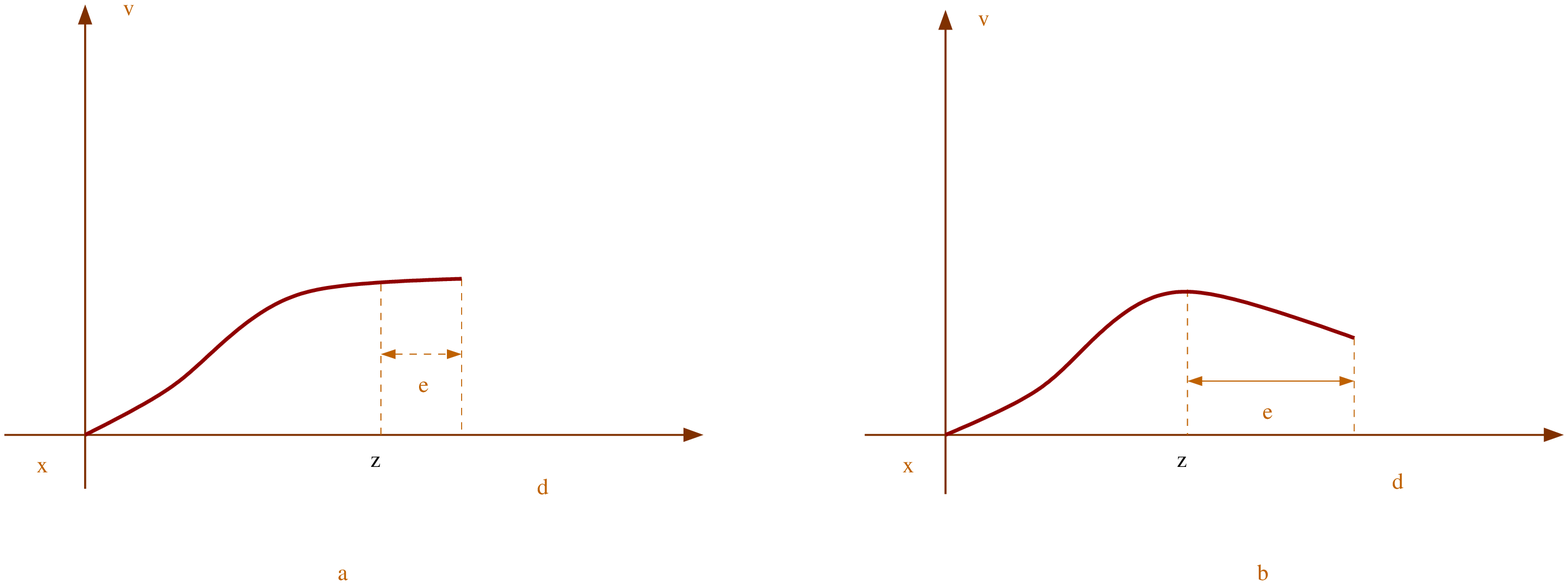}
    \caption{ }
    \label{graph}
    \end{center}
\end{figure}\\
\indent We now explain, how the function $h$ could possibly lose its continuity at direction points mentioned in the given three cases.
\begin{itemize}
\item {\large Case-1}: In this case $\displaystyle \lim_{n \rightarrow \infty} h(\mathbf{d}_n) = 0$. As range of the function $h$ is $(0,\infty]$, it would be discontinuous at $\mathbf{d} \in S(\mathbf{0},1)$ in such case.\footnote{Refer Example \ref{z_d zero}.}
\item {\large Case-2}: Consider a special instant of this case, in which there exists a neighborhood $N_{\varepsilon}(\mathbf{d})$ (w.r.t. induced topology on $S(\mathbf{0},1)$) of $\mathbf{d}$ for some $\varepsilon > 0$ such that: for every $\mathbf{u} \in N_{\varepsilon}(\mathbf{d}) \setminus \{\mathbf{d}\}$, the corresponding point $\mathbf{z}_u$ is not an inflection point. \\
Consider a sequence of direction points $\{\mathbf{d}_n\} \in N_{\varepsilon}(\mathbf{d}) \setminus \{\mathbf{d}\} $ converging to $\mathbf{d}$. For any $n \in \mathbb{N}$, the corresponding points $\mathbf{z}_{d_n}$ is not an inflection point. Therefore, $\displaystyle \lim_{n \rightarrow \infty} \mathbf{z}_{d_n}$ exists. Let $\mathbf{y}_d := \displaystyle \lim_{n \rightarrow \infty} \mathbf{z}_{d_n}$. As Lyapunov candidate $V$ is continuously differentiable, 
\begin{equation}
 \langle V(\mathbf{y}_d), \mathbf{d} \rangle = \lim_{n \rightarrow \infty} \langle V(\mathbf{z}_{d_n}), \mathbf{d}_n \rangle = 0\,.
\end{equation}
Now by definition of the $h$ function, $\parallel \mathbf{z}_d-\mathbf{x}^*\parallel\; \leq \; \parallel \mathbf{y}_d-\mathbf{x}^*\parallel$. If $\mathbf{z}_d \neq \mathbf{y}_d$, then the function $h$ would be discontinuous at direction point $\mathbf{d}$.
\item {\large Case-3}: In this case, $\mathbf{z}_{d_n}$ is an inflection point for every $n \in \mathbb{N}$. Therefore, $\displaystyle \lim_{n \rightarrow \infty} \mathbf{z}_{d_n}$ may not even exists; and even if it exists, it may not be equal to $\mathbf{z}_d$. Therefore, in this case the function $h$ could be discontinuous at direction point $\mathbf{d} \in S(\mathbf{0},1)$.
\end{itemize}
Thus, the function $h$ could lose its continuity at these types of direction points.
\end{proof} \vspace{0.1in} \\
{\large \textit{Remarks}}:
\begin{itemize}
 \item There is some intersection between case-2 type and case-3 type discontinuities in Lemma \ref{cont-h}.
 \item Note that, the case-3 type discontinuity in Lemma \ref{cont-h} has been included just for the sake of theoretical completeness. For Lyapunov candidates (corresponding to the globally asymptotically stable equilibrium point) under consideration, it is extremely unlikely that the function $h$ defined on them will have a case-3 type discontinuity at some direction point $\mathbf{d} \in S(\mathbf{0},1)$.
 \item Following lemma roughly says that: if the function $h$, defined on a Lyapunov candidate corresponding to the globally asymptotically stable equilibrium point, has a case-2 type discontinuity at some $\mathbf{d} \in S(\mathbf{0},1)$, and this discontinuity at the direction point $\mathbf{d}$ is not a case-3 type discontinuity; then the function $h$ is lower semi-continuous at $\mathbf{d} \in S(\mathbf{0},1)$.
\end{itemize}

\begin{lemma}
  Consider the function $h:S(\mathbf{0},1) \rightarrow \mathbb{R} \cup \{\infty\}$ defined on a Lyapunov candidate corresponding to the globally asymptotically stable equilibrium point $\mathbf{x}^*$. Let $\mathbf{d} \in S(\mathbf{0},1)$ be a direction point for which the corresponding point $\mathbf{z}_d$ is an inflection point of one variable function $V\vert_{\{\mathbf{x}^*+\gamma\mathbf{d}\mid \gamma \geq 0\}}$. Suppose for every sequence of direction point $\{\mathbf{d}_n\}$ converging to $\mathbf{d}$, the limit: $\displaystyle \lim_{n \rightarrow \infty} \mathbf{z}_{d_n}$ exists. Then, the function $h$ is lower semi-continuous at $\mathbf{d} \in S(\mathbf{0},1)$.
\end{lemma}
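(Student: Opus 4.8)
The plan is to verify the defining inequality for lower semi-continuity directly: for an arbitrary sequence of direction points $\{\mathbf{d}_n\}$ converging to $\mathbf{d}$, I would show $h(\mathbf{d}) \leq \liminf_{n \rightarrow \infty} h(\mathbf{d}_n)$. If $\liminf_{n \rightarrow \infty} h(\mathbf{d}_n) = \infty$ there is nothing to prove, so I would restrict to the case where this $\liminf$ equals a finite number $L$ and pass to a subsequence (not relabelled) along which $h(\mathbf{d}_n) \rightarrow L$. Along this subsequence all but finitely many $h(\mathbf{d}_n)$ are finite, so the points $\mathbf{z}_{d_n} = \mathbf{x}^* + h(\mathbf{d}_n)\,\mathbf{d}_n$ are well defined, and the standing hypothesis guarantees that $\mathbf{y}_d := \lim_{n \rightarrow \infty} \mathbf{z}_{d_n}$ exists. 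Since $h(\mathbf{d}_n)\,\mathbf{d}_n \rightarrow L\,\mathbf{d}$, I would immediately identify $\mathbf{y}_d = \mathbf{x}^* + L\,\mathbf{d}$ and $\parallel \mathbf{y}_d - \mathbf{x}^* \parallel = L$.

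The central step is to recognise $L$ as a critical value of $k_d$. For each $n$ with $h(\mathbf{d}_n) < \infty$ the definition of $h$ gives $\langle \nabla V(\mathbf{z}_{d_n}),\, \mathbf{d}_n \rangle = k_{d_n}'(h(\mathbf{d}_n)) = 0$. Because $V$ is continuously differentiable, $\nabla V$ is continuous; letting $n \rightarrow \infty$ and using $\mathbf{z}_{d_n} \rightarrow \mathbf{y}_d$ together with $\mathbf{d}_n \rightarrow \mathbf{d}$ then yields $\langle \nabla V(\mathbf{y}_d),\, \mathbf{d} \rangle = 0$, i.e. $k_d'(L) = 0$. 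Thus $L$ is a nonnegative value of $\gamma$ at which $k_d'$ vanishes. Since $h(\mathbf{d})$ is, by definition, the \emph{smallest} strictly positive such $\gamma$, once I know that $L > 0$ I can conclude $L \geq h(\mathbf{d})$, and therefore $h(\mathbf{d}) \leq L = \liminf_{n \rightarrow \infty} h(\mathbf{d}_n)$, which is exactly lower semi-continuity at $\mathbf{d}$. This is the same minimality observation already recorded for Case-2 direction points in Lemma \ref{cont-h}.

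The one genuinely delicate point, and what I expect to be the main obstacle, is excluding $L = 0$, i.e. ruling out $\mathbf{z}_{d_n} \rightarrow \mathbf{x}^*$, since $h$ is strictly positive and an inward collapse would force a downward jump and destroy lower semi-continuity. I would exploit both hypotheses together. First, the assumption that $\lim_n \mathbf{z}_{d_n}$ exists for \emph{every} sequence $\mathbf{d}_n \rightarrow \mathbf{d}$ forces, by interleaving two arbitrary sequences, a single common limit $\mathbf{y}_d = \lim_{\mathbf{u} \rightarrow \mathbf{d}} \mathbf{z}_u$; in particular it excludes the Case-3 situation in which the corresponding points oscillate. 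Second, the assumption that $\mathbf{z}_d$ is an inflection point pins $h(\mathbf{d})$ as the first positive zero of $k_d'$, with $k_d' > 0$ on $(0, h(\mathbf{d}))$ because \eqref{pos-def} makes $V$ strictly increasing away from $\mathbf{x}^*$ along $\mathbf{d}$. I would then argue that a collapse $\mathbf{z}_u \rightarrow \mathbf{x}^*$ would require the first zero of $k_u'$ to migrate to the origin for the nearby directions, and that the uniform convergence $k_u' \rightarrow k_d'$ on compact subintervals of $(0, h(\mathbf{d}))$ is incompatible with this while the inflection structure persists at distance $h(\mathbf{d})$; hence the common limit is the genuine critical point $\mathbf{y}_d \neq \mathbf{x}^*$ and $L > 0$. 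Making this uniform-convergence step fully rigorous, in particular controlling the type (local maximum, local minimum, or inflection) of the migrating critical point of $k_u$, is the crux; the remaining implications are routine limiting arguments.
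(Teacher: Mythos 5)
Your main line of argument --- passing to the limit in $\langle \nabla V(\mathbf{z}_{d_n}),\,\mathbf{d}_n\rangle = 0$ using continuity of $\nabla V$, identifying the limit point $\mathbf{y}_d=\mathbf{x}^*+L\,\mathbf{d}$ as satisfying $k_d'(L)=0$, and then invoking minimality in the definition of $h$ to conclude $h(\mathbf{d})\leq L$ --- is exactly the paper's own proof, which consists of precisely those two steps and nothing more. Your instinct that excluding $L=0$ is the delicate point is also correct, and it is a point the paper's proof silently skips: when $L=0$ the relation $k_d'(0)=0$ holds automatically (since $\nabla V(\mathbf{x}^*)=\mathbf{0}$ at the global minimum of a differentiable positive definite function), so minimality of $h(\mathbf{d})$ among \emph{strictly positive} critical values yields nothing, and the desired inequality $h(\mathbf{d})\leq L$ would in fact be false because $h(\mathbf{d})>0$.

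The genuine gap is that your proposed way of excluding $L=0$ would not survive being made rigorous. First, the interleaving argument gives a unique common limit for all sequences, but it cannot prevent that common limit from being $\mathbf{x}^*$ itself: a collapse $\mathbf{z}_{d_n}\rightarrow\mathbf{x}^*$ is a perfectly convergent sequence, hence fully consistent with the lemma's hypothesis (this is the Case-1 situation of Lemma \ref{cont-h}, which the hypothesis does not rule out --- it only rules out oscillation). Second, locally uniform convergence $k_u'\rightarrow k_d'$ on compact subsets of $(0,h(\mathbf{d}))$ is \emph{not} incompatible with the first zero of $k_u'$ migrating to the origin, precisely because $k_d'(0)=0$: on any interval $[0,\delta]$ the limit function itself dips to $0$, so an arbitrarily small perturbation of it can vanish or change sign somewhere in $(0,\delta)$ while remaining strictly positive on every compact set bounded away from $0$. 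So controlling $k_u'$ on compacta of $(0,h(\mathbf{d}))$, which is all that continuity of $\nabla V$ buys you, never sees the collapsing zeros. Example \ref{z_d zero} in the paper shows that such a collapse of $\mathbf{z}_u$ into $\mathbf{x}^*$ genuinely occurs for continuously differentiable, positive definite, coercive candidates, so no regularity argument alone can exclude it. As written, your proof (like the paper's) establishes lower semi-continuity only under the unstated additional assumption that the common limit $\lim_{n\rightarrow\infty}\mathbf{z}_{d_n}$ differs from $\mathbf{x}^*$. (One further remark: if the hypothesis ``every sequence converging to $\mathbf{d}$'' is read as allowing sequences that take the value $\mathbf{d}$ infinitely often, then interleaving with the constant sequence forces the common limit to equal $\mathbf{z}_d$, and the lemma becomes trivially true --- indeed one gets continuity of $h$ at $\mathbf{d}$, not merely lower semi-continuity; but under that reading both your argument and the paper's are superfluous, so it is clearly not the intended one.)
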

\begin{proof}
 Consider an arbitrary sequence of direction points $\mathbf{d}_n\in S(\mathbf{0},1)$ converging to $\mathbf{d}$. Let us denote $\displaystyle \lim_{n \rightarrow \infty} \mathbf{z}_{d_n}$ by $\mathbf{y}_{(d,\{d_n\})}$ (with direction $\mathbf{d}$ and sequence $\{\mathbf{d}_n\}$ as a subscript). As Lyapunov candidate $V$ is continuously differentiable, 
\begin{equation}
 \langle V(\mathbf{y}_{(d,\{d_n\})}), \mathbf{d} \rangle = \lim_{n \rightarrow \infty} \langle V(\mathbf{z}_{d_n}), \mathbf{d}_n \rangle = 0\,.
\end{equation}
Now by definition of the $h$ function, $\parallel \mathbf{z}_d-\mathbf{x}^*\parallel\; \leq \; \parallel \mathbf{y}_{(d,\{d_n\})}-\mathbf{x}^*\parallel$. Same argument can be made for every sequence of direction points converging to $\mathbf{d}$. Therefore, the function $h$ is lower semi-continuous at the direction point $\mathbf{d}$.
\end{proof}

\end{appendix}

\end{document}